\tikzset{node distance=1cm, bend angle=20,
vertex/.style={circle,minimum size=2mm,very thick, draw=black, fill=black, inner sep=0mm}, information text/.style={inner sep=1ex, font=\Large}, help lines/.style={-,color=black, >=stealth', shorten <=.5pt, shorten >=.5pt}, blue help lines/.style={help lines,color=darkblue}, red help lines/.style={help lines, color=darkred},
>={[scale=1.1]Stealth}}
\definecolor{darkred}{RGB}{105,0,0}
\newtheoremstyle{prime}%
 {\item[\hskip\labelsep \theorem@headerfont ##1\ \theorem@separator]}%
{\item[\hskip\labelsep \theorem@headerfont ##1\ ##3' \theorem@separator]}
\newtheoremstyle{proofof}
{\item[\hskip\labelsep \theorem@headerfont ##1\ \theorem@separator]}%
{\item[\hskip\labelsep \theorem@headerfont ##1\ ##3\theorem@separator]}
\newtheorem{theorem}{Theorem}
\newtheorem{proposition}[theorem]{Proposition}
\newtheorem{observation}[theorem]{Observation}
\newtheorem{corollary}[theorem]{Corollary}
\newtheorem{conjecture}{Conjecture}
\newtheorem{question}[conjecture]{Question}
\newtheorem{claim}{Claim}
\theoremstyle{prime}
\def \QD1 {\hfill $\spadesuit$}
\newcommand{\case}[2]{\noindent {\bf Case #1\/:} {\it #2}}
\newcommand{\bd}[1]{\overset{\text{\tiny$\bm\leftrightarrow$}}{#1}}
\newcommand{\overr}[1]{\overset{\text{\tiny$\bm\rightarrow$}}{#1}}
\newcommand{\overl}[1]{\overset{\text{\tiny$\bm\leftarrow$}}{#1}}
\newcommand{\ems}{\varnothing}
\newcommand{\sm}{\setminus}
\numberwithin{equation}{section}
\theoremstyle {nonumberplain}
\newtheorem{proof}{Proof}
\theoremstyle{proofof}
\newtheorem{proofof}{Proof of}
\newtheorem{proof2}{Proof}
\newcommand{\jou}[4]{{\em #1} {\bf #2} (#3) #4.}
\def \JCTB {J. Combin. Theory \, Ser.~B}
\def \DM {Discrete Math.}
\begin{document}
\title{\bf Haj\'os and Ore constructions for digraphs}

\author{{{J{\o}rgen Bang-Jensen}\thanks{Research supported by the Danish
research council under grant number 7014-00037B}
\thanks{
University of Southern Denmark, IMADA, Campusvej 55, DK-5320 Odense M, Denmark. E-mail address: jbj@imada.sdu.dk
}}
\and
{{Thomas Bellitto}\footnotemark[1]
\thanks{
University of Southern Denmark, IMADA, Campusvej 55, DK-5320 Odense M, Denmark. E-mail address: bellitto@imada.sdu.dk
}}
\and
{{Thomas Schweser}\footnotemark[1]
\thanks{
Technische Universit\"at Ilmenau, Inst. of Math., PF 100565, D-98684 Ilmenau, Germany. E-mail
address: thomas.schweser@tu-ilmenau.de}}
\and
{{Michael Stiebitz}\thanks{
Technische Universit\"at Ilmenau, Inst. of Math., PF 100565, D-98684 Ilmenau, Germany. E-mail
address: michael.stiebitz@tu-ilmenau.de}}
}

\date{}
\maketitle

\begin{abstract}
The chromatic number $\overr{\chi}(D)$ of a digraph $D$ is the minimum number of colors needed to color the vertices of $D$ such that each color class induces an acyclic subdigraph of $D$. A digraph $D$ is $k$-critical if $\overr{\chi}(D) = k$ but $\overr{\chi}(D') < k$ for all proper subdigraphs $D'$ of $D$. We examine methods for creating infinite families of critical digraphs, the \emph{Dirac join} and the directed and bidirected \emph{Haj\'os join}. We prove that a digraph $D$ has chromatic number at least $k$ if and only if it contains a subdigraph that can be obtained from bidirected complete graphs on $k$ vertices by (directed) Haj\'os joins and identifying non-adjacent vertices. Building upon that, we show that a digraph $D$ has chromatic number at least $k$ if and only if it can be constructed from bidirected $K_k$'s by using directed and bidirected Haj\'os joins and identifying non-adjacent vertices (so called Ore joins), thereby transferring a well-known result of Urquhart to digraphs. Finally, we prove a Gallai-type theorem that characterizes the structure of the low vertex subdigraph of a critical digraph, that is, the subdigraph, which is induced by the vertices that have in-degree $k-1$ and out-degree $k-1$ in $D$.
\end{abstract}

\noindent{\small{\bf AMS Subject Classification:} 05C20 }

\noindent{\small{\bf Keywords:} Digraph coloring, Critical digraphs, Haj\'os join, Ore join}

\section{Introduction}

Recall that the \textbf{chromatic number} $\chi(G)$ of a graph $G$ is the minimum number of colors needed to color the vertices of $G$ so that each color class induces an edgeless subgraph of $G$. A graph $G$ is $k$-critical if $\chi(G)=k$ but $\chi(G') < k$ for each proper subgraph $G'$ of $G$. The topic of critical graphs has received much attention within the last century. Critical graphs were first introduced by G.~A.~Dirac in his doctoral thesis; famous mathematicians like G~Haj\'os, T.~Gallai and others continued developing the theory of critical graphs in the 1960's.

However, not much is known regarding critical digraphs. Following Neumann-Lara~\cite{NeuLa82}, the chromatic number $\overr{\chi}(D)$ of a digraph $D$ is the minimum number of colors needed to color the vertices of $D$ so that each color class induces an acyclic subdigraph of $D$, \emph{i.e.}, a subdigraph that does not contain any directed cycles. A digraph $D$ is $k$\textbf{-critical} (or, briefly, \textbf{critical}) if $\overr{\chi}(D)=k$ but $\overr{\chi}(D')<k$ for each proper subdigraph of $D$. Let $\text{Crit}(k)$ denote the class of $k$-critical digraphs. Then, it is easy to see that $\text{Crit}(0)=\{\varnothing\}, \text{ Crit}(1)=\{K_1\}$, and that $\text{Crit}(2)$ consists of all directed cycles. Nevertheless, it is not even known which digraphs $\text{Crit}(3)$ consists of; unlike in the undirected case, where it follows from König's characterization of bipartite graphs that $\text{Crit}(3)$ coincides with the class of all odd cycles. In this paper, we study the digraph analogue of two well-known methods for creating infinite families of critical graphs, the so-called \emph{Dirac join} and the \emph{Haj\'os join}. Moreover, we prove that a digraph $D$  has chromatic number at least $k$ if and only if it contains a Haj\'os-$k$-constructible subdigraph, that is, a subdigraph of $D$ that can be obtained from bidirected $K_k$'s by iteratively applying the Haj\'os join and identifying non-adjacent vertices (see Theorem~\ref{theorem_Hajos-constructible}). In Section~\ref{section_gallai} we prove a Gallai-type theorem that characterizes the structure of the low-vertex subdigraph of a $k$-critical digraph, that is, the subdigraph that is induced by the vertices having in-degree and out-degree $k-1$.

\section{Basic Terminology}
Most of our terminology is defined as in \cite{BaGu08}. Let $D=(V(D),A(D))$ be a digraph. Then, $V(D)$ is the \textbf{set of vertices} of $D$ and $A(D)$ is the \textbf{set of arcs} of $D$. The order $|D|$ of $D$ is the size of $V(D)$. Digraphs in this paper are not allowed to have loops nor parallel arcs; however, there may be two arcs in opposite directions between two vertices (in this case we say that the arcs are \textbf{opposite}). We denote by $uv$ the arc whose \textbf{initial vertex} is $u$ and whose \textbf{terminal vertex} is $v$.
Two vertices $u,v$ are \textbf{adjacent} if at least one of $uv$ and $vu$ belongs to $A(D)$. If $u$ and $v$ are adjacent, we also say that $u$ is a \textbf{neighbor} of $v$ and vice versa. If $uv \in A(D)$, then $v$ is called an \textbf{out-neighbor} of $u$ and $u$ is called an \textbf{in-neighbor} of $v$. By $N_D^+(v)$ we denote the set of out-neighbors of $v$; by $N_D^-(v)$ the set of in-neighbors of $v$. Given a digraph $D$ and a vertex set $X$, by $D[X]$ we denote the subdigraph of $D$ that is \textbf{induced} by the vertex set $X$, that is, $V(D[X])=X$ and $A(D[X])=\{uv \in A(D) ~|~ u,v \in X\}$. A digraph $D'$ is said to be an induced subdigraph of $D$ if $D'=D[V(D')]$. As usual, if $X$ is a subset of $V(D)$, we define $D-X=D[V(D) \setminus X]$. If $X=\{v\}$ is a singleton, we use $D-v$ rather than $D- \{v\}$. The \textbf{out-degree} of a vertex $v \in V(D)$ is the number of arcs whose inital vertex is $v$; we denote it by $d_D^+(v)$. Similarly, the number of arcs whose terminal vertex is $v$ is called the \textbf{in-degree} of $v$ and is denoted by $d_D^-(v)$. Note that $d_D^+(v)=|N_D^+(v)|$ and $d_D^-(v)=|N_D^-(v)|$ for all $v \in V(D)$. A vertex $v \in V(D)$ is \textbf{Eulerian} if $d_D^+(v)=d_D^-(v)$. Moreover, the digraph $D$ is \textbf{Eulerian} if every vertex of $D$ is Eulerian.

Given a digraph $D$, its \textbf{complement} is the digraph $\overline{D}$ with $V(\overline{D})=V(D)$ and $A(\overline{D})=\{uv ~|~ u,v \in V(D) \text{ and } uv \not \in A(D)\}$. The \textbf{underlying} graph $G(D)$ of $D$ is the simple undirected graph with $V(G(D))=V(D)$ and $\{u,v\}\in E(G(D))$ if and only if at least one of $uv$ and $vu$ belongs to $A(D)$. The digraph $D$ is \textbf{(weakly) connected} if $G(D)$ is connected. A \textbf{separating vertex} of a connected digraph $D$ is a vertex $v \in V(D)$ such that $D-v$ is not connected. Furthermore, a \textbf{block} of $D$ is a maximal subdigraph $D'$ of $D$ such that $D'$ has no separating vertex.

A \textbf{directed path} is a non-empty digraph $P$ with $V(P)=\{v_1,v_2,\ldots,v_p\}$ and $A(P)=\{v_1v_2, v_2v_3, \ldots, v_{p-1}v_p\}$ where the $v_i$ are all distinct. Furthermore, a \textbf{directed cycle} of  \textbf{length} $p\geq 2$ is a non-empty digraph $C$ with $V(C)=\{v_1,v_2,\ldots,v_p\}$ and $A(C)=\{v_1v_2,v_2v_3, \ldots, v_{p-1}v_p, v_pv_1\}$ where the $v_i$ are all distinct. A directed cycle of length $2$ is called a \textbf{digon}.
A \textbf{bidirected} graph is a digraph that can be obtained from an undirected  (simple) graph $G$ by replacing each edge by two opposite arcs, we denote it by $D(G)$. A bidirected complete graph is also called a \textbf{complete digraph}. By $\bd{K_k}$ we denote the bidirected complete graph on $k$ vertices. It is easy to see that $\overr{\chi}(D(G))=\chi(G)$ and $D(G)$ is critical with respect to $\overr{\chi}$ if and only if $G$ is critical with respect to $\chi$.

 \section{Construction of critical digraphs}\label{section_construction}
Let $D_1$ and $D_2$ be two disjoint digraphs. Let $D$ be the digraph obtained from the union $D_1 \cup D_2$ by adding all possible arcs in both directions between $D_1$ and $D_2$, \emph{i.e.}, $V(D)=V(D_1) \cup V(D_2)$ and $A(D)=A(D_1) \cup A(D_2) \cup \{uv, vu ~|~ u \in V(D_1) \text{ and } v \in V(D_2)\}$. We say that $D$ is the \textbf{Dirac join} of $G_1$ and $G_2$ and denote it by $D= D_1 \boxplus D_2$. The proof of the next theorem is straightforward and therefore left to the reader.

\begin{theorem}[Dirac Construction]
Let $D=D_1 \boxplus D_2$ be the Dirac join of two disjoint non-empty digraphs $D_1$ and $D_2$. Then, $\overr{\chi}(D) = \overr{\chi}(D_1) + \overr{\chi}(D_2)$ and $D$ is critical if and only if both $D_1$ and $D_2$ are critical.
\end{theorem}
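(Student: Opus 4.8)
The plan is to prove the two claims—the additivity of $\overr{\chi}$ under the Dirac join and the characterization of criticality—separately, each via a pair of inequalities.

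\medskip

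\noindent\textbf{Additivity.} First I would show $\overr{\chi}(D)\le \overr{\chi}(D_1)+\overr{\chi}(D_2)$. Take optimal acyclic colorings $\f_1$ of $D_1$ with colors $\{1,\dots,\overr{\chi}(D_1)\}$ and $\f_2$ of $D_2$ with colors $\{\overr{\chi}(D_1)+1,\dots,\overr{\chi}(D_1)+\overr{\chi}(D_2)\}$, and combine them. Since every color class lies entirely inside $D_1$ or entirely inside $D_2$, and all arcs between $D_1$ and $D_2$ are removed (their endpoints get different colors), each class induces the same subdigraph as in $D_i$, hence is acyclic. For the reverse inequality $\overr{\chi}(D)\ge \overr{\chi}(D_1)+\overr{\chi}(D_2)$, the key observation is that in any acyclic coloring $\f$ of $D$, no color may be used on both a vertex of $D_1$ and a vertex of $D_2$: if $u\in V(D_1)$ and $v\in V(D_2)$ had $\f(u)=\f(v)$, then $uv$ and $vu$ are both arcs of $D$, so $\{u,v\}$ is a digon inside a single color class, contradicting acyclicity. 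Thus the colors used on $V(D_1)$ are disjoint from those used on $V(D_2)$; since the restriction of $\f$ to $D_i$ is an acyclic coloring of $D_i$ it uses at least $\overr{\chi}(D_i)$ colors, and the two inequalities give the equality.

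\medskip

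\noindent\textbf{Criticality, sufficiency.} Suppose $D_1$ and $D_2$ are both critical, say $k_i$-critical, so $\overr{\chi}(D)=k_1+k_2$. Let $a$ be any arc of $D$. If $a\in A(D_i)$, then $D-a=(D_i-a)\boxplus D_{3-i}$, and by criticality of $D_i$ and additivity $\overr{\chi}(D-a)=\overr{\chi}(D_i-a)+\overr{\chi}(D_{3-i})<k_i+k_{3-i}$. If instead $a$ is one of the join arcs, say $a=uv$ with $u\in V(D_1)$, $v\in V(D_2)$, then I claim $\overr{\chi}(D-a)<k_1+k_2$: color $D_1$ optimally, color $D_2-v$ with $k_2-1$ colors (possible since $D_2$ is $k_2$-critical, so $D_2-v$ has a coloring with fewer colors; actually one must be slightly careful—use that $D_2$ has an acyclic coloring in which $v$ can be forced to repeat a color, which holds because $\overr{\chi}(D_2-v)\le k_2-1$ unless... ). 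The clean way is: since $\overr{\chi}(D_2-v)\le k_2-1$, extend a $(k_2-1)$-coloring of $D_2-v$ to $D_2$ by giving $v$ a color already used on some in-neighbor of $v$ that is not... — instead, simpler: give $v$ the \emph{same} color as $u$. Then all join arcs except $a=uv$ still join differently-colored vertices except possibly $vu$; but $vu$ together with some arc would need a directed cycle through a monochromatic set—handle this by instead assigning $v$ a fresh reuse. The robust argument: by additivity $\overr{\chi}(D-uv)\le \overr{\chi}(D-\{$both arcs between $u,v\})$ and the latter is $\overr{\chi}$ of a digraph where $u,v$ are identifiable; since removing the digon between one pair cannot increase the chromatic number and a short case analysis shows it strictly decreases it here, we get $\overr{\chi}(D-a)<k_1+k_2$. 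So $D$ is critical.

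\medskip

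\noindent\textbf{Criticality, necessity.} Conversely suppose $D$ is critical but, say, $D_1$ is not critical; then $D_1$ has a proper subdigraph $D_1'$ with $\overr{\chi}(D_1')=\overr{\chi}(D_1)$ (taking $D_1'$ to be a $\overr{\chi}(D_1)$-critical subdigraph of $D_1$, which exists and is proper). Then $D'=D_1'\boxplus D_2$ is a proper subdigraph of $D$ with, by additivity, $\overr{\chi}(D')=\overr{\chi}(D_1')+\overr{\chi}(D_2)=\overr{\chi}(D_1)+\overr{\chi}(D_2)=\overr{\chi}(D)$, contradicting criticality of $D$; so both $D_1$ and $D_2$ must be critical. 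One should also note $D$ being critical forces $D_1,D_2$ nonempty, which is given.

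\medskip

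\noindent The step I expect to be the main obstacle is the removal of a join arc in the sufficiency direction: unlike the undirected Dirac join, where deleting a join edge trivially lets the two sides share a color, here one has to ensure that reusing a color across the join does not create a digon or a longer directed cycle through the remaining join arcs, so the color chosen for the identified/merged vertices must be picked with care (or one argues indirectly via additivity applied to the digraph with the whole digon between $u$ and $v$ deleted). Everything else is bookkeeping with the observation that monochromatic pairs joined by both arcs form digons.
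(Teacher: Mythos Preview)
The paper leaves this proof to the reader, so there is no paper argument to compare against; your additivity proof and the necessity direction are correct and standard. The genuine gap is in the sufficiency direction when the deleted arc $a=uv$ is a join arc with $u\in V(D_1)$, $v\in V(D_2)$. None of your attempts there go through. Coloring $D_1$ optimally and then giving $v$ the color of $u$ fails whenever $u$ shares its color with some other vertex $u'\in V(D_1)$: the arcs $u'v$ and $vu'$ both survive in $D-uv$, so $\{u',v\}$ is a monochromatic digon. Your ``robust argument'' has the monotonicity backwards: $D-\{uv,vu\}$ is a subdigraph of $D-uv$, so $\overr{\chi}(D-\{uv,vu\})\le\overr{\chi}(D-uv)$, not the inequality you wrote, and hence a bound on the former tells you nothing about the latter.

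The missing idea is to exploit the criticality of \emph{both} factors to isolate $u$ and $v$ together in a singleton-pair color class. Take a $(k_1-1)$-coloring of $D_1-u$ and a $(k_2-1)$-coloring of $D_2-v$ on disjoint palettes, and then assign one further fresh color to both $u$ and $v$. Every color class other than $\{u,v\}$ lies entirely inside $D_1-u$ or $D_2-v$ and is therefore acyclic; the class $\{u,v\}$ induces in $D-uv$ only the single arc $vu$, which is acyclic. This is a $(k_1+k_2-1)$-coloring of $D-uv$, completing the case you flagged as the main obstacle.
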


The Haj\'os join is a well-known tool for undirected graphs that can be used to create infinite families of $k$-critical graphs, see e.~g.~\cite{Ha61}. For digraphs, an equivalent construction was defined by Hoshino and Kawarabayashi in \cite{HoKa14}.
Let $D_1$ and $D_2$ be two disjoint digraphs and select an arc $u_1v_1$ and an arc $v_2u_2$. Let $D$ be the digraph obtained from the union $D_1 \cup D_2$ by deleting the arcs $u_1v_1$ as well as $v_2u_2$, identifying the vertices $v_1$ and $v_2$ to a new vertex $v$, and adding the arc $u_1u_2$. We say that $D$ is the \textbf{(directed) Haj\'os join} of $D_1$ and $D_2$ and write $D=(D_1,v_1,u_1) \triangledown (D_2,v_2,u_2)$ or, briefly, $D=D_1 \triangledown D_2$. For the proof of the next theorem, recall that a $k$-coloring of a digraph $D$ is a coloring of $D$, in which at most $k$ colors are used. Statement (c) of the following theorem has already been mentioned in \cite[Prop. 2]{HoKa14}

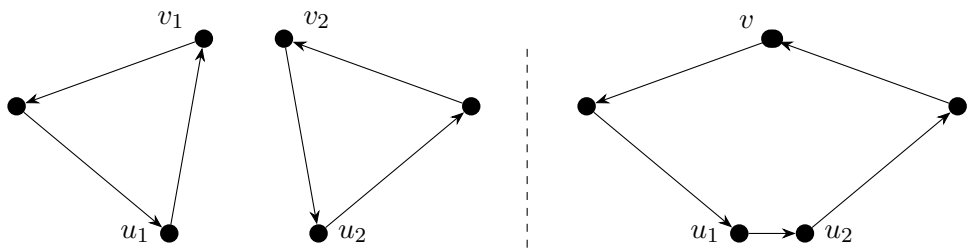
\begin{figure}[htbp]
\centering
\begin{tikzpicture}[>={[scale=1.1]Stealth}]

\node[draw=none,minimum size=3cm,regular polygon,regular polygon sides=3, rotate=-40] (a) {};

\node[vertex, label={177:$v_1$}] (a1) at (a.corner 1) {};
\node[vertex] (a2) at (a.corner 2){};
\node[vertex, label={left:$u_1$}] (a3) at (a.corner 3){};

\node[draw=none,minimum size=3cm,regular polygon,regular polygon sides=3, xshift=3cm, rotate=40] (b) {};
\node[vertex, label={3:$v_2$}] (b1) at (b.corner 1) {};
\node[vertex, label={right:$u_2$}] (b2) at (b.corner 2){};
\node[vertex] (b3) at (b.corner 3){};

\path[-]
(a1) edge [->] (a2)
(a2) edge [->] (a3)
(a3) edge [->] (a1)
(b1) edge [->] (b2)
(b2) edge [->] (b3)
(b3) edge [->] (b1);

\node[xshift=3.2cm] (h1) at (b1){};
\node[xshift=3.2cm, yshift=-3cm] (h2) at (b1){};
\draw[dashed] (h1) -- (h2);

\begin{scope}[xshift=7.5cm]
\node[draw=none,minimum size=3cm,regular polygon,regular polygon sides=3, rotate=-40] (a) {};
\node[vertex, label={177:$v$}] (a1) at (a.corner 1) {};
\node[vertex] (a2) at (a.corner 2){};
\node[vertex, label={left:$u_1$}] (a3) at (a.corner 3){};

\node[draw=none,minimum size=3cm,regular polygon,regular polygon sides=3, xshift=1.9cm, rotate=40] (b) {};
\node[vertex] (b1) at (b.corner 1) {};
\node[vertex, label={right:$u_2$}] (b2) at (b.corner 2){};
\node[vertex] (b3) at (b.corner 3){};

\path[-]
(a1) edge [->] (a2)
(a2) edge [->] (a3)
(a3) edge [->] (b2)
(b2) edge [->] (b3)
(b3) edge [->] (b1);

\end{scope}

\end{tikzpicture}
\caption{The Haj\'{o}s join of two directed cycles of length $3$}
\label{fig_C3Haj}
\end{figure}

\begin{theorem}[Haj\'os Construction] \label{theorem_Hajos-join}
Let $D=D_1 \triangledown D_2$ be the Haj\'os join of two disjoint non-empty digraphs $D_1$ and $D_2$. Then, the following statements hold:
\begin{itemize}
\item[\upshape (a)] $\overr{\chi}(D) \geq \min\{\overr{\chi} (D_1), \overr{\chi}(D_2)\}$.
\item[\upshape (b)] If $\overr{\chi}(D_1)=\overr{\chi}(D_2)=k$ and $k \geq 3$, then $\overr{\chi}(D)=k$.
\item[\upshape (c)] If both $D_1$ and $D_2$ are $k$-critical and $k \geq 3$, then $D$ is $k$-critical.
\item[\upshape (d)] If $D$ is $k$-critical and $k \geq 3$, then both $D_1$ and $D_2$ are $k$-critical.
\end{itemize}
\end{theorem}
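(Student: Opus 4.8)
The plan is to extract two elementary facts about the Haj\'os join and derive all four statements from them. Write $V_1=V(D_1)\sm\{v_1\}$ and $V_2=V(D_2)\sm\{v_2\}$, so that $V(D)=V_1\cup V_2\cup\{v\}$; the crucial observation is that $D-v$ is the disjoint union of the induced subdigraphs $D[V_1]=D_1-v_1$ and $D[V_2]=D_2-v_2$ together with the single arc $u_1u_2$, so that every directed cycle of $D$ lies in $D[V_1\cup\{v\}]$, or in $D[V_2\cup\{v\}]$, or passes through $v$ and uses $u_1u_2$. Two consequences. \emph{(P) Pull-back:} for a colouring $\f$ of $D$, let $\f_1$ be the colouring of $D_1$ agreeing with $\f$ on $V_1$ and with $\f_1(v_1):=\f(v)$; then any monochromatic directed cycle of $\f_1$ that avoids the arc $u_1v_1$ yields a monochromatic directed cycle of $\f$ (replace $v_1$ by $v$ — legitimate, since all out-neighbours of $v_1$, and all in-neighbours except $u_1$, survive as neighbours of $v$), and symmetrically for $D_2$ and $v_2u_2$. \emph{(C) Combining:} if $\f_1,\f_2$ are acyclic colourings of $D_1,D_2$ (or of subdigraphs thereof) with $\f_1(v_1)=\f_2(v_2)$, then colouring $D$ by $\f_1$ on $V_1$, $\f_2$ on $V_2$, and the common colour on $v$ is acyclic: a bad cycle inside $D[V_i\cup\{v\}]$ pulls back by (P), while a bad cycle of the form $v\to z\to\cdots\to u_1\to u_2\to\cdots\to w\to v$ forces $v_1\to z\to\cdots\to u_1\to v_1$ to be a monochromatic cycle of $\f_1$ through the arc $u_1v_1$.

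For \textbf{(a)}, suppose $\overr{\chi}(D)=t<\min\{\overr{\chi}(D_1),\overr{\chi}(D_2)\}$ and fix an acyclic $t$-colouring $\f$ of $D$; then $\f_1$ and $\f_2$ are both non-acyclic, so by (P) there is a monochromatic directed path from $v_1$ to $u_1$ in $D_1-u_1v_1$ and one from $u_2$ to $v_2$ in $D_2-v_2u_2$, all in the colour of $v$, and pulling these back and splicing them with the arc $u_1u_2$ produces a monochromatic directed cycle of $\f$, a contradiction. Parts \textbf{(b)} and \textbf{(c)} now follow easily: (a) gives $\overr{\chi}(D)\ge k$; for (b), permute colours of optimal colourings of $D_1,D_2$ so that $v_1$ and $v_2$ agree, and apply (C). For (c), $\overr{\chi}(D)=k$ by (b), and it suffices to show every arc-deletion of $D$ is $(k-1)$-colourable (a digraph of chromatic number $k$ with no isolated vertex and all arc-deletions $(k-1)$-colourable is $k$-critical, and $D$ has no isolated vertex since $D_1,D_2$ do): if the deleted arc corresponds to an arc $a'$ of $D_1$, glue via (C) a $(k-1)$-colouring of $D_1-a'$ (using criticality of $D_1$) to one of $D_2-v_2u_2$ (using criticality of $D_2$); symmetrically if it corresponds to an arc of $D_2$; and use $D_1-u_1v_1$ together with $D_2-v_2u_2$ if the deleted arc is $u_1u_2$ — in each case the factor that still contains its distinguished arc ($u_1v_1$ or $v_2u_2$) absorbs, via (P), any cycle running through $u_1u_2$.

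For \textbf{(d)}, assume $D=D_1\triangledown D_2$ is $k$-critical. First, $\overr{\chi}(D_i)\ge k$: if $\overr{\chi}(D_1)\le k-1$, then $D[V_2\cup\{v\}]$ is a \emph{proper} subdigraph of $D$ (as $|D_1|\ge 2$) isomorphic to $D_2-v_2u_2$, hence $(k-1)$-colourable, and (C) glues this to a $(k-1)$-colouring of $D_1$ to $(k-1)$-colour $D$, impossible; the case $\overr{\chi}(D_2)\le k-1$ is symmetric, using $D[V_1\cup\{v\}]\cong D_1-u_1v_1$. Next, for each arc $a$ of $D_1$ we show $\overr{\chi}(D_1-a)\le k-1$. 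If $a=u_1v_1$, restrict a $(k-1)$-colouring of $D-u_1u_2$ to the $D_1$-side. If $a\ne u_1v_1$, let $\tilde a$ be the corresponding arc of $D$, take a $(k-1)$-colouring $\f$ of $D-\tilde a$, and extend it to $D_1-a$ by $v_1\mapsto\f(v)$; if the result is acyclic we are done, and otherwise by (P) the offending monochromatic cycle uses $u_1v_1$, so $\f(u_1)=\f(v)=:c$ and there is a monochromatic directed path from $v_1$ to $u_1$ in $D_1-a$. Extending $\f$ on the $D_2$-side to $D_2$ by $v_2\mapsto c$ cannot be acyclic, because $\overr{\chi}(D_2)\ge k$ exceeds the number of colours used; so by (P) its bad cycle uses $v_2u_2$, whence $\f(u_2)=c$ and there is a monochromatic directed path from $u_2$ to $v_2$; pulling both paths back to $D-\tilde a$ and joining them through $u_1u_2$ yields a monochromatic directed cycle of $\f$, a contradiction. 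Hence all arc-deletions of $D_1$ (and, symmetrically, of $D_2$) are $(k-1)$-colourable, and as $D_1,D_2$ inherit from $D$ the absence of isolated vertices, both are $k$-critical.

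The parts resting directly on (P) and (C) are routine bookkeeping; the one genuinely delicate point is the last step of (d), where a monochromatic cycle of the extended colouring runs through the \emph{deleted} arc $u_1v_1$. Such a cycle cannot be pushed into $D_1$ by itself, and must instead be re-created inside $D-\tilde a$ by routing through both factors along $u_1u_2$; this is exactly the step that consumes the previously established bound $\overr{\chi}(D_2)\ge k$, and with it the hypothesis $k\ge 3$ that underlies the whole theorem.
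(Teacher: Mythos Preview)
Your proof is correct and follows essentially the same approach as the paper's: both arguments restrict and extend colourings between $D$ and the $D_i$ via the identification $v\leftrightarrow v_i$, and both hinge on the observation that a monochromatic cycle of the combined colouring either lives on one side (and transfers to $D_i$) or crosses through $u_1u_2$ and can be closed up into a monochromatic cycle of $D_i$ using the deleted arc $u_1v_1$ (respectively $v_2u_2$). Your packaging of these two moves as principles (P) and (C) is a clean organisational choice, and you are slightly more explicit than the paper about the ``no isolated vertices'' step needed to pass from arc-deletions to full criticality; the only quibble is that your closing remark locating the use of $k\ge 3$ is off --- the argument in fact goes through for $k=2$ as well.
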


\begin{proof} Suppose that $D=(D_1,v_1,u_1) \triangledown (D_2,v_2,u_2)$ and let $v$ denote the vertex that is obtained from identifying $v_1$ and $v_2$. For the proof of (a) let $\overr{\chi}(D)=k$ and let $\varphi$ be a $k$-coloring of $D$. For $i \in \{1,2\}$, let $\varphi_i$ denote the restriction of $\varphi$ to $D_i$, where $\varphi_i(v_i)=\varphi(v).$ We claim that either $\varphi_1$ is a $k$-coloring of $D_1$ or $\varphi_2$ is a $k$-coloring of $D_2$. Otherwise, in $D_1$ there is a monochromatic directed cycle $C_1$ that contains the arc $u_1v_1$ (as $D_1 - u_1v_1$ is a subdigraph of $D$ and therefore $k$-colorable). Similar, in $D_2$ there exists a monochromatic cycle $C_2$ that contains the arc $v_2u_2$. But then, $C_1 \cup C_2 - u_1v_1 - v_2u_2 + u_1u_2$ is a monochromatic directed cycle in $D$, a contradiction. This proves (a).

In order to prove (b), let $\overr{\chi}(D_1)=\overr{\chi}(D_2)=k$. By (a), $\overr{\chi}(D) \geq k$. Thus, it suffices to show that $\overr{\chi}(D) \leq k$. For $i \in \{1,2\}$, let $\varphi_i$ be a $k$-coloring of $D_i$. By permuting the colors if necessary we obtain $\varphi_1(v_1)=\varphi_2(v_2)$. For $w \in V(D)$ let
$$\varphi(w) =
\begin{cases}
\varphi_1(w) \quad \text{if } w \in V(D_1),\\
\varphi_2(w) \quad \text{if } w \in V(D_2), \text{ and  }\\
\varphi_1(v_1) \quad \text{if } w=v.
\end{cases}$$
We claim that $\varphi$ is a $k$-coloring of $D$. For otherwise, $D$ would contain a monochromatic directed cycle $C$ with $\{u_1,u_2,v\} \subseteq V(C)$ and $u_1u_2 \in A(C)$.  But then, $(C \cap D_1) + u_1v_1$ is a monochromatic directed cycle in $D_1$, which is impossible.

For the proof of (c) it suffices to show that $\overr{\chi}(D-a) < k$ for all $a \in A(D)$ (by (b)). If $a = u_1u_2$, then choosing $(k-1)$-colorings of $D_1 - u_1v_1$ and $D_2 - v_2u_2$ that assign the same color to $v_1$ and $v_2$ and taking the union of those colorings clearly leads to a $(k-1)$-coloring of $D$. Let $a \in A(D) \setminus \{u_1u_2\}$. By symmetry, we may assume that $a \in A(D_1)$. Then, there is a $(k-1)$-coloring $\varphi_1$ of $D_1 - a$ and a $(k-1)$-coloring $\varphi_2$ of $D_2 - v_2u_2$ such that $\varphi_1(v_1)=\varphi_2(v_2)$. By taking the union of those colorings we obtain a $(k-1)$-coloring of $D$ and so $D$ is $k$-critical, as claimed.

To prove statement (d) first assume that $\overr{\chi}(D)=k$ but $\overr{\chi}(D_1)=k-1$. Then there is a $(k-1)$-coloring $\varphi$ of $D_1$. Since $D$ is $k$-critical, there furthermore exists a $(k-1)$-coloring $\varphi_2$ of $D_2 - v_2u_2$ with $\varphi_2(v_2)=\varphi_1(v_1)$ and the union of $\varphi_1$ and $\varphi_2$ is a $(k-1)$-coloring of $D$. Hence, $\overr{\chi}(D_1) \geq k$ and, by symmetry, we obtain $\overr{\chi}(D_2) \geq k$. In order to complete the proof we need to show that $\overr{\chi}(D_i - a ) < k$ for $i \in \{1,2\}$ and for $a \in A(D_i)$. By symmetry, it suffices to show this for $D_1$. If $a = u_1v_1$, then $\overr{\chi}(D_1 - a) < k$ as $D_1 - a$ is a proper subdigraph of $D$ and therefore $(k-1)$-colorable. Let $a \in A(D_1) \setminus \{u_1v_1\}$. Then, there is a $(k-1)$-coloring $\varphi$ of $D - a$. We claim that the restriction of $\varphi$ to $V(D_1)$ is a $(k-1)$-coloring of $D_1 - a$. For otherwise, in $D_1 - a$ there would exist a monochromatic directed cycle $C_1$ that contains the arc $u_1v_1$. Since $\overr{\chi}(D_2) \geq k$, the restriction of $\varphi$ to $V(D_2)$ creates a monochromatic directed cycle $C_2$ in $D_2$ that contains the arc $u_2v_2$. However, $C_1 \cup C_2 - u_1v_1 - v_2u_2 + u_1v_1$ is a monochromatic directed cycle in $D - a$ with respect to $\varphi$, a contradiction. This completes the proof.
\end{proof}

Another common operation for graphs and digraphs is the identification of independent sets. Let $D$ be a digraph and let $I$ be a non-empty independent set of $D$. Then, we can create a new digraph $H$ from $D-I$ by adding a new vertex $v=v_I$ and adding all arcs from $v$ to $N_H^+(I)= \bigcup_{v \in I} N_H^+(v)$ and all arcs from $N_H^-(I)=\bigcup_{v \in I}N_H^-(v)$ to $v$. We say that $H$ is obtained from $D$ by \textbf{identifying $I$ with $v$}, or briefly by \textbf{identifying independent vertices} and write $H=D/(I \to v)$ (briefly $H=D/I$). It is obvious that any $k$-coloring of $D/I$ can be extended to a $k$-coloring of $D$ by coloring each vertex of $I$ with the color of $v_I$. Thus, $\overr{\chi}(D/I) \geq \overr{\chi}(D)$.

We define the class of \textbf{Haj\'os-$k$-constructible digraphs} as the smallest family of digraphs that contains the bidirected complete graph of order $k$ and is closed ander Haj\'os joins and identifying independent vertices. The next result was proved for undirected graphs by Haj\'os \cite{Ha61}.

\begin{theorem}\label{theorem_Hajos-constructible}
Let $k \geq 3$ be an integer. A digraph has chromatic number at least $k$ if and only if it contains a Haj\'os-$k$-constructible subdigraph.
\end{theorem}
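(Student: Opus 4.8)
The plan is to mirror the classical Hajós proof, with the two directions treated separately. The easy direction is "if": a digraph $D$ containing a Hajós-$k$-constructible subdigraph $H$ has $\overr{\chi}(D)\ge\overr{\chi}(H)\ge k$. For this I would first observe that $\overr{\chi}(\bd{K_k})=k$ (indeed $\bd{K_k}=D(K_k)$ and $\overr{\chi}(D(G))=\chi(G)$ from Section~2), and then show by induction on the construction sequence that every Hajós-$k$-constructible digraph has chromatic number at least $k$. The inductive step uses Theorem~\ref{theorem_Hajos-join}(a) for the Hajós join (if $\overr{\chi}(D_1),\overr{\chi}(D_2)\ge k$ then $\overr{\chi}(D_1\triangledown D_2)\ge\min\{\overr{\chi}(D_1),\overr{\chi}(D_2)\}\ge k$) and the inequality $\overr{\chi}(D/I)\ge\overr{\chi}(D)$ noted just before the theorem for vertex identification. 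Finally $\overr{\chi}(D)\ge\overr{\chi}(H)$ since $H$ is a subdigraph of $D$, which is immediate from monotonicity of $\overr{\chi}$ under taking subdigraphs.

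The substantive direction is "only if": if $\overr{\chi}(D)\ge k$ then $D$ contains a Hajós-$k$-constructible subdigraph. I would argue by contradiction. Suppose the claim fails and choose a counterexample $D$ with $\overr{\chi}(D)\ge k$, no Hajós-$k$-constructible subdigraph, and --- among all such --- first with $|V(D)|$ minimum and then with $|A(D)|$ maximum. Minimality/maximality forces strong structure on $D$: $D$ must have exactly $k$ vertices is too much to hope for, but one can show that the "non-adjacency" relation on $V(D)$ is transitive, i.e. if $u,v$ are non-adjacent and $v,w$ are non-adjacent (with $u\ne w$) then $u,w$ are non-adjacent. Indeed, if not, add to $D$ all arcs between $u$ and $w$ that are missing; by arc-maximality the new digraph $D^+$ has a Hajós-$k$-constructible subdigraph $H$, and $H$ must use some arc between $u$ and $w$ (else $H\subseteq D$). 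Then one "unsplits": because $u,v$ and $v,w$ are non-adjacent in $D$, one can realize the relevant added arcs via a Hajós join of $H$ with a bidirected $K_k$ followed by identifying $v$ with an appropriate non-adjacent vertex, producing a Hajós-$k$-constructible subdigraph of $D$ itself --- contradiction. This is the standard Hajós "gadget" argument adapted to digraphs, using that $\bd{K_k}$ has arcs in both directions so either orientation of a missing arc can be supplied.

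Granting transitivity of non-adjacency, the non-edges of the underlying graph $G(D)$ partition $V(D)$ into classes of mutually non-adjacent vertices, and contracting each class (i.e. identifying each independent set) turns $D$ into a digraph $D'$ whose underlying graph is complete; note $D'$ is obtained from $D$ by identifying independent vertices, so a Hajós-$k$-constructible subdigraph of $D'$ would pull back to one of $D$. Now $D'$ has underlying complete graph; since $\overr{\chi}(D')\ge\overr{\chi}(D)\ge k$ (identification does not decrease $\overr{\chi}$) I want to conclude $D'$ contains $\bd{K_k}$ as a subdigraph, which is itself Hajós-$k$-constructible --- contradiction, finishing the proof. This last implication --- \emph{a digraph whose underlying graph is complete and whose chromatic number is at least $k$ contains $\bd{K_k}$} --- is the main obstacle: in the undirected case "underlying graph complete" would already mean a clique, but here each edge of $G(D')$ might be a single arc rather than a digon, so $D'$ need not contain $\bd{K_k}$ just from being "complete". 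I expect the resolution to be a separate lemma: if $\overr{\chi}(D')\ge k$ then $D'$ contains $\bd{K_m}$ for $m\ge$ something, or more precisely one needs that a digraph with underlying complete graph and no $\bd{K_k}$ subdigraph has an acyclic-ordering-based $(k-1)$-coloring (a digraph with no digon on some pair can be "split" along that non-digon, and an induction on $|V(D')|$ peeling off a vertex whose in- or out-neighborhood misses a digon yields $\overr{\chi}(D')\le k-1$). Pinning down exactly this structural lemma, and checking the unsplitting gadget respects arc directions, is where the real work lies; the rest is bookkeeping with the three operations (subdigraph, Hajós join, identification) each of which is monotone for $\overr{\chi}$ in the direction we need.
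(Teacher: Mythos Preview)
Your ``if'' direction and the contradiction skeleton for ``only if'' are sound, but the transitive case contains a genuine gap that cannot be repaired along the lines you sketch. The lemma you hope for --- that a digraph with complete underlying graph and $\overr{\chi}\ge k$ must contain $\bd{K_k}$ --- is false: any tournament $T$ with $\overr{\chi}(T)\ge k$ (and such tournaments exist for every $k$) has complete underlying graph but no digon whatsoever, hence no $\bd{K_k}$. Your ``pull-back'' from $D'=D/I$ to $D$ is also unjustified: identification is a forward step in the Haj\'os calculus, and if the contracted vertex $v_I$ lies in the Haj\'os-$k$-constructible $H'\subseteq D'$, the arcs of $H'$ at $v_I$ may originate from different members of $I$, so $H'$ need not lift to any subdigraph of $D$. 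Your non-transitive gadget is off as well (adding arcs between the already-adjacent pair $u,w$ and Haj\'os-joining with a fresh $\bd{K_k}$ forces either arcs at $v$ or a near-$\bd{K_k}$ in $D$, neither of which is available), though this part is easily fixed: add the single missing arcs $uv$ and $vw$ \emph{separately}, obtain Haj\'os-$k$-constructible $D_{uv}\subseteq D+uv$ and $D_{vw}\subseteq D+vw$ by arc-maximality, Haj\'os-join disjoint copies (delete $uv$ and $vw$, identify the two copies of $v$, add $uw$), and then identify each duplicated vertex. Since $D_{uv}-uv,\ D_{vw}-vw\subseteq D$ and $uw\in A(D)$, the result sits inside $D$.

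The paper repairs the transitive case by replacing undirected non-adjacency with the \emph{directed} relation $u\sim v\Leftrightarrow uv\notin A(D)$. This fails transitivity strictly more often than yours (every failure of undirected non-adjacency yields a failure of $\sim$, but not conversely), so more work is pushed into the easy Case~2 above. When $\sim$ \emph{is} transitive one obtains far more than a complete multipartite underlying graph: $D$ is semicomplete multipartite, and between any two parts the arcs are either all digons or all in one fixed direction. With this structure the paper verifies that $D$ contains no filled odd hole, no filled odd antihole, and no induced directed cycle of length $\ge 3$, and then invokes the perfect-digraph theorem of Andres and Hochst\"attler (Theorem~\ref{theorem_perfect-digraph}, itself resting on the Strong Perfect Graph Theorem) to conclude that $D$ is perfect, whence $\omega(D)=\overr{\chi}(D)\ge k$ and $\bd{K_k}\subseteq D$. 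There does not appear to be an elementary substitute for this step.
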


For the proof of the above theorem we need a result on perfect digraphs. Recall that the \textbf{clique number} $\omega(D)$ of a digraph $D$ is the size of the largest bidirected complete subdigraph of $D$. A \textbf{perfect digraph} is a digraph $D$ satisfying that for each induced subdigraph $H$ of $D$ it holds $\overr{\chi}(H)=\omega(H)$. Recall that an \textbf{odd hole} is an (undirected) cycle of odd length at least $5$  and an \textbf{odd antihole} is the complement of an odd hole. Moreover, a \textbf{filled odd hole/antihole} is a digraph $D$ so that $S(D)$ is an odd hole/antihole, where $S(D)$ is the \textbf{symmetric part} of $D$, that is, the graph with vertex set $V(D)$ and edge set $$E(S(D))=\{uv ~|~ uv \in A(D) \text{ and } vu \in A(D)\}.$$ Andres and Hochstättler \cite[Corollary~5]{AnHo15} proved the following result on perfect digraphs.


\begin{theorem}[Andres and Hochstättler]\label{theorem_perfect-digraph}
A digraph $D$ is perfect if and only if it contains none of the following as an induced subdigraph: a filled odd hole, a filled odd antihole, or a directed cycle of length at least $3$ as induced subdigraph.
\end{theorem}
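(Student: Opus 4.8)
The plan is to prove the two directions separately: necessity is elementary, and sufficiency reduces the problem to the Strong Perfect Graph Theorem (SPGT) applied to the symmetric part $S(D)$. Two trivial observations will be used throughout: for any digraph $H$ one has $\overr{\chi}(H) \ge \chi(S(H))$, since every edge of $S(H)$ spans a digon and a monochromatic digon is a monochromatic directed cycle; and $\omega(H) = \omega(S(H))$, since a bidirected complete subdigraph of $H$ is exactly a clique of $S(H)$.

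For necessity, suppose $D$ contains one of the three configurations as an induced subdigraph $H$. If $H$ is a directed cycle of length $\ge 3$, then $\overr{\chi}(H) = 2$ (deleting one vertex leaves an acyclic directed path) while $\omega(H) = 1$. If $H$ is a filled odd hole, then $S(H)$ is an odd cycle $C_{2\ell+1}$ with $\ell \ge 2$, so $\overr{\chi}(H) \ge \chi(C_{2\ell+1}) = 3 > 2 = \omega(H)$. If $H$ is a filled odd antihole, then $S(H) = \overline{C_{2\ell+1}}$, so $\overr{\chi}(H) \ge \chi(\overline{C_{2\ell+1}}) = \ell+1 > \ell = \omega(H)$. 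In all cases $H$ is an induced subdigraph with $\overr{\chi}(H) > \omega(H)$, so $D$ is not perfect. Note that a filled odd hole or antihole may carry additional single arcs, but these are invisible to $S(\cdot)$ and, not being digons, cannot help lower $\overr{\chi}$, so the above bounds are unaffected.

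For sufficiency, assume $D$ (hence every induced subdigraph of $D$) contains none of the three configurations. The key lemma I would isolate is: \emph{if a digraph $H$ has no induced directed cycle of length $\ge 3$, then $\overr{\chi}(H) = \chi(S(H))$.} The inequality $\ge$ is the observation above; for $\le$, take an optimal proper coloring $c$ of $S(H)$ and check that each color class $X$ is acyclic in $H$ — otherwise a shortest directed cycle in $H[X]$ has length $\ge 3$ (no digons, since $X$ is independent in $S(H)$) and is chordless by minimality, hence is an induced directed cycle of length $\ge 3$, a contradiction. Granting this, let $H$ be any induced subdigraph of $D$; then $\overr{\chi}(H) = \chi(S(H))$ and $\omega(H) = \omega(S(H))$, so it remains to show $\chi(S(H)) = \omega(S(H))$, i.e.\ that $S(D)$ is a perfect graph. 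By SPGT, $S(D)$ is perfect unless it has an odd hole or odd antihole on some vertex set $W$; but then $D[W]$ would be an induced filled odd hole, resp.\ filled odd antihole, of $D$, contrary to assumption. Hence $S(D)$ is perfect, and therefore so is $D$.

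The main obstacle is the Strong Perfect Graph Theorem, which I would invoke as a black box (as in the original argument of Andres and Hochstättler). Everything else is short; the one genuinely digraph-theoretic point is the lemma, and within it the fact that a shortest directed cycle has no chords, which is exactly what couples $\overr{\chi}$ to $\chi(S(\cdot))$ in the chord-free regime. I would also take care with the small cases ($\ell \ge 2$, so odd holes and antiholes have at least five vertices, and $C_3,C_4$ behave as claimed) and verify the identity $S(D[W]) = S(D)[W]$, so that induced subdigraphs of $D$ correspond precisely to induced subgraphs of $S(D)$.
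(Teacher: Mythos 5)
This theorem is imported by the paper from Andres and Hochst\"attler \cite{AnHo15} and is not proved in the text, so there is no internal proof to compare against; the paper only remarks that the original argument relies on the Strong Perfect Graph Theorem. Your proof is correct and follows exactly that known route: necessity by exhibiting the three configurations as induced subdigraphs $H$ with $\overr{\chi}(H)>\omega(H)$, and sufficiency via the lemma that $\overr{\chi}(H)=\chi(S(H))$ in the absence of induced directed cycles of length at least $3$ (the chordless-shortest-cycle argument is the right justification), which reduces perfection of $D$ to perfection of $S(D)$ and hence to the SPGT.
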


This theorem is a really nice and powerful tool in many ways. If $D=D(G)$ is a bidirected graph, then the theorem is equivalent to the Strong Perfect Graph Theorem (SPGT) by Chudnovsky, Robertson, Seymour, and Thomas~\cite{CRST06}, and hence, the SPGT follows from Andres and Hochstättler's result. Nevertheless, their proof heavily relies on the SPGT.

\begin{proofof}[Theorem \ref{theorem_Hajos-constructible}] Let $k\geq 3$ be an integer. Clearly, every Haj\'os-$k$-constructible digraph has chromatic number at least $k$ (by Theorem~\ref{theorem_Hajos-join} and since $\overr{\chi}(D/I) \geq \overr{\chi}(D)$ for each independent set $I$ of a digraph $D$). This proves the ``if''-implication. The proof of the ``only if''-implication is by reductio ad absurdum. Let $D$ be a maximal counter-example in the sense that $D$ does not contain a Haj\'os-$k$-constructible  subdigraph but adding a new arc $a \in A(\overline{D})$ to $D$ implies the existence of a Haj\'os-$k$-constructible subdigraph $D_a$ of $D + a$ with $a \in A(D_a)$. As $D$ is not Haj\'os-$k$-constructible, $D$ is not a $\bd{K_k}$. For two vertices $u,v \in V(D)$, let $u \sim v$ denote the relation that $uv \not \in A(D)$. We distinguish between two cases and show that both of them lead to a contradiction.

\case{1}{$\sim$ is transitive.}  Then, in particular, the relation of being identical or non-adjacent in $D$ is an equivalence relation. This implies that $D$ is a semicomplete multipartite digraph with parts $I_1,I_2,\ldots,I_\ell$, \emph{i.e.}, $I_j$ is an independent set in $D$ for $j \in \{1,2,\ldots,\ell\}$ and for $u \in I_i, v \in I_j$ with $i \neq j$, $A(D)$ contains at least one of $uv$ and $vu$. Suppose that there are vertices $u \in I_i, v \in I_j$ such that $u$ and $v$ induce a digon in $D$. Then we claim that each pair of vertices $(u',v') \in I_i \times I_j$ induces a digon in $D$. Otherwise, (by symmetry) there are vertices $u,u' \in I_i$ and a vertex $v \in I_j$ such that $\{uv, vu,u'v\} \subseteq A(D)$ but $vu' \not \in A(D)$. But then, $v \sim u', u' \sim u$ and $v \not \sim u$ and hence, $\sim$ is not transitive, a contradiction. This proves the claim that all arcs between $I_i$ and $I_j$ are bidirected. Moreover, by using a similar argumentation, it is easy to see that if there are vertices $u \in I_i, v \in I_j$ with $uv \in A(D)$ and $vu \not \in A(D)$, then $A_D(I_j,I_i)=\varnothing$. Now we claim that $D$ is perfect. By Theorem~\ref{theorem_perfect-digraph} we only need to prove that $D$ does neither contain a filled odd hole, nor a filled odd antihole, nor an induced directed cycle of length at least $3$ as an induced subdigraph.

First assume that $D$ contains a filled odd hole $C$ as an induced subdigraph. Let $v_1,v_2,\ldots,v_r,v_1$ be a cyclic ordering of the vertices of the filled odd hole. By symmetry, we may assume that $v_3 \sim v_1$. As $\sim$ is transitive, this implies that $v_1v_4 \in A(D)$ (as otherwise $v_3 \sim v_1, v_1\sim v_4,$ but $v_3 \not \sim v_4$) and so $v_4 \sim v_1$. As a consequence, $v_1v_3 \in A(D)$ (since $v_4 \sim v_1$ and $v_4v_3 \in A(D)$). By continuing this argumentation we obtain that $v_1v_i \in A(D)$ for all $i \in \{2,\ldots,r\}$. Moreover, regarding $v_2$, it follows that $v_2v_4 \in A(D)$ (as otherwise $v_2 \sim v_4, v_4 \sim v_1,$ but $v_2 \not \sim v_1$, a contradiction). As a consequence, $v_2v_i \in A(D)$ for all $i \in \{4,5,\ldots,r\}$. Finally, $v_3v_r \in A(D)$ (as otherwise $v_3 \sim v_r, v_r \sim v_2$, but $v_3 \not \sim v_2$). However, since $C$ is a filled odd hole, this gives us $v_r \sim v_3$ and so  $v_r \sim v_3, v_3 \sim v_1$, but $v_1 \not \sim v_r$, a contradiction. Thus, $D$ cannot contain a filled odd hole as an induced subdigraph.

Next assume that $D$ contains a filled odd antihole $C$ as an induced subdigraph. Let again $v_1,v_2,\ldots,v_r, v_1$ be a cyclic ordering of the vertices. By symmetry, we may assume that $v_1 \sim v_2$. Then, $v_2v_3 \in A(D)$ as otherwise $\sim$ would not be transitive. Continuing this argument, we obtain that $v_i\sim v_{i+1}$ for $i$ odd and $v_iv_{i+1} \in A(D)$ for even $i$. As $r$ is odd this implies $v_r \sim v_1$. As a consequence, $v_r \sim v_1, v_1 \sim v_2$, but $v_rv_2 \in A(D)$, a contradiction. Thus, $D$ contains no filled antiholes as induced subdigraphs.

Finally, assume that $D$ contains an directed cycle $C$ of length at least $3$ as an induced subdigraph. Again, let $v_1,v_2,\ldots,v_r, v_1$ be a cyclic ordering of the vertices of $C$. Then, $v_1 \sim v_r, v_r \sim v_2$, but $v_1v_2 \in A(D)$, a contradiction. As a consequence, $D$ is perfect by Theorem~\ref{theorem_perfect-digraph} and so $D$ contains a bidirected complete graph of order at least $k$ as a subdigraph and, therefore, a Haj\'os-$k$-constructible sudigraph, which is impossible.

\case{2}{$\sim$ is not transitive.} Then there are vertices $u,v,w \in V(D)$ such that $uv \not \in A(D)$, $vw \not \in A(D)$, but $uw \in A(D)$. By the maximality of $D$, there exist Haj\'os-$k$-constructible subdigraphs $D_{uv} \subseteq D + uv$ and $D_{vw} \subseteq D + vw$. Let $D'$ be the graph obtained from the union $(D_{uv} - uv) \cup (D_{vw} - vw)$ by adding the arc $uw$. Then, $D'$ is a subdigraph of $D$ that can be obtained from disjoint copies of $D_{uv}$ and $D_{uw}$ as follows. First we apply the Haj\'os join by removing the copies of the arcs $uv$ and $vw$, identifying the two copies of $v$, and adding the arc $uw$. Afterwards, for each vertex $x$ that belongs to both $D_{uv}$ and $D_{vw}$, we identify the two copies of $x$. Hence, $D'$ is a Haj\'os-$k$-constructible subdigraph of $D$, a contradiction. This completes the proof.
\end{proofof}

In the last two decades Haj\'os' theorem (Theorem~\ref{theorem_Hajos-constructible}) became very popular among graph theorists. Haj\'os like theorems were established for the list chromatic number by Gravier \cite{Gravier96} and Kr\'al \cite{Kral2004}, for the circular chromatic number by Zhu  \cite{Zhu2003}, for the signed chromatic number by Kang \cite{Kang2018a}, for the chromatic number of edge weighted graphs by Mohar \cite{Mohar2004}, for graph homomorphisms by
Ne\v{s}etril \cite{Nesetril99}, and  for Grassmann homomorphism (a homomorphism concept that
provides a common generalization of graph colorings, hypergraph colorings and
nowhere-zero flows) by Jensen \cite{Jensen2017}.

\section{The Ore construction}
Regarding undirected graphs, Urquhart \cite{Ur97} proved that each graph with chromatic number at least $k$ does not only contain a Haj\'os-$k$-constructible subgraph but itself is Haj\'os-$k$-constructible. Recall that a \textbf{Haj\'os join} of two undirected disjoint graphs $G_1$ and $G_2$ is done by deleting two edges $e=uv \in E(G_1)$  and $e'=u'v' \in E(G_2)$, identifying the vertices $v$ and $v'$, and adding the edge $uu'$. The aim of this section is to point out that the same result does not hold for digraphs and to prove that, however, a slight modification of the Haj\'os join does the trick. The proof of the next theorem is straightforward and left to the reader.

\begin{theorem}
Let $k \geq 3$ be an integer and let $D$ be a Haj\'os-$k$-constructible digraph. Then, $D$ is strongly connected.
\end{theorem}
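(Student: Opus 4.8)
The plan is to argue by induction on the length of a construction sequence producing $D$: one shows that $\bd{K_k}$ is strongly connected and that both operations in the definition of a Haj\'os-$k$-constructible digraph preserve strong connectivity. The base case is immediate, since in $\bd{K_k}$ any two vertices are joined by arcs in both directions.

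For the Haj\'os join step, suppose $D_1$ and $D_2$ are strongly connected and $D=(D_1,v_1,u_1)\triangledown(D_2,v_2,u_2)$, with $v$ the vertex obtained by identifying $v_1$ and $v_2$ and $u_1u_2$ the added arc. The key point is a bookkeeping observation on which directed paths survive the two arc deletions. Since $v_1$ is the head of the deleted arc $u_1v_1$, no directed path of $D_1$ starting at $v_1$ uses $u_1v_1$; hence in $D_1-u_1v_1$ the vertex $v_1$ still reaches every vertex of $D_1$. Dually, $u_1$ is the tail of $u_1v_1$, so no directed path of $D_1$ ending at $u_1$ uses it, and in $D_1-u_1v_1$ every vertex still reaches $u_1$; in the same way, in $D_2-v_2u_2$ the vertex $u_2$ reaches every vertex while every vertex reaches $v_2$. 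Combining these facts with the arc $u_1u_2$, I would check that every vertex of $D$ reaches $v$ and that $v$ reaches every vertex of $D$: a vertex of the $D_1$-part reaches $u_1$, crosses $u_1u_2$, and then $u_2$ reaches $v=v_2$; a vertex of the $D_2$-part reaches $v=v_2$ directly; and symmetrically for directed paths leaving $v=v_1$. Then, for any two vertices $x$ and $y$, a directed walk from $x$ to $v$ followed by one from $v$ to $y$ is a directed walk from $x$ to $y$ and hence contains a directed path from $x$ to $y$, so $D$ is strongly connected.

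For the identification step, suppose $D$ is strongly connected and $H=D/(I\to v_I)$ for a non-empty independent set $I$. Let $q\colon V(D)\to V(H)$ collapse $I$ to $v_I$ and fix every other vertex. Since $I$ is independent, no arc of $D$ has both ends in $I$, and from the definition of the identification one sees directly that $q$ maps every arc of $D$ to an arc of $H$: an arc with both ends outside $I$ maps to itself; an arc from a vertex of $I$ to a vertex outside $I$ maps to an arc out of $v_I$; and an arc into a vertex of $I$ maps to an arc into $v_I$. Hence $q$ sends directed walks of $D$ to directed walks of $H$. Given $x',y'\in V(H)$, choose $x,y\in V(D)$ with $q(x)=x'$ and $q(y)=y'$, take a directed path from $x$ to $y$ in $D$, and push it through $q$ to get a directed walk, hence a directed path, from $x'$ to $y'$ in $H$. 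So $H$ is strongly connected, which closes the induction.

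The only delicate point --- and what I expect to be the main (and only mild) obstacle --- is the arc-survival bookkeeping in the Haj\'os join step: one must be careful that deleting $u_1v_1$ and $v_2u_2$ leaves intact the one-sided reachabilities (namely, $v_1$ reaches everything and everything reaches $u_1$ in $D_1-u_1v_1$, and dually in $D_2$) and then combine them through the arc $u_1u_2$ with the orientations lined up correctly. Everything else is routine; in particular the hypothesis $k\ge 3$ plays no role.
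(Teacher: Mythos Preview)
Your proof is correct. The paper does not give a proof of this statement at all: it writes ``The proof of the next theorem is straightforward and left to the reader,'' so there is no paper argument to compare against. Your induction on the construction sequence --- checking strong connectivity of $\bd{K_k}$, then showing that the directed Haj\'os join and identification of an independent set each preserve strong connectivity --- is precisely the kind of routine verification the authors had in mind. The arc-survival bookkeeping in the Haj\'os join step (that deleting $u_1v_1$ leaves the one-sided reachabilities from $v_1$ and to $u_1$ intact, and dually in $D_2$) is handled cleanly, and your observation that the quotient map $q$ sends arcs to arcs because $I$ is independent is exactly what is needed for the identification step. Your remark that the hypothesis $k\geq 3$ is not used in the proof is also accurate.
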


As a consequence of the above theorem, every digraph with chromatic number at least $k$ that is not strongly connected is not Haj\'os-$k$-constructible and so Urquhart's Theorem cannot be directly transferred to digraphs. Nevertheless, it turns out that we get an Urquhart-type theorem by further allowing the following join. Let $D_1$ and $D_2$ be two digraphs and let $u_1,v_1 \in V(D_1)$ and $u_2,v_2 \in V(D_2)$ such that $D_i[\{u_i,v_i\}]$ is a digon for $i \in \{1,2\}$. Now let $D$ be the digraph obtained from the union $D_1 \cup D_2$ by deleting both arcs between $u_1$ and $v_1$ as well as both arcs between $u_2$ and $v_2$, identifying the vertices $v_1$ and $v_2$ to a new vertex $v$, and adding both arcs $u_1u_2$ and $u_2u_1$. We say that $D$ is the \textbf{bidirected Haj\'os join} of $D_1$ and $D_2$ and write $D= (D_1,v_1,u_1) \bd{\triangledown} (D_2,v_2,u_2)$ or, briefly, $D=D_1 \bd{\triangledown} D_2$. Note that the bidirected Haj\'os join is the exact analogue of the undirected Hajos join. By a slight modification of the proof of Theorem~\ref{theorem_Hajos-join}(a)-(c) one can easily show that the following holds.

\begin{theorem}[Bidirected Haj\'os Construction] \label{theorem_bidirected-Hajos-join}
Let $D=D_1 \bd{\triangledown} D_2$ result from the bidirected Haj\'os join of two disjoint non-empty digraphs $D_1$ and $D_2$. Then, the following statements hold:
\begin{itemize}
\item[\upshape (a)] $\overr{\chi}(D) \geq \min\{\overr{\chi} (D_1), \overr{\chi}(D_2)\}$.
\item[\upshape (b)] If $\overr{\chi}(D_1)=\overr{\chi}(D_2)=k$ and $k \geq 3$, then $\overr{\chi}(D)=k$.
\item[\upshape (c)] If both $D_1$ and $D_2$ are $k$-critical and $k \geq 3$, then $D$ is $k$-critical.
\end{itemize}
\end{theorem}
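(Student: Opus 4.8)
The plan is to mimic the proof of Theorem~\ref{theorem_Hajos-join}(a)--(c), replacing single arcs by digons throughout. Write $D=(D_1,v_1,u_1)\bd{\triangledown}(D_2,v_2,u_2)$ and let $v$ denote the vertex obtained from identifying $v_1$ and $v_2$; note that in $D$ the vertices $u_1$ and $u_2$ are joined by a digon, while $v$ is adjacent (via the former neighborhoods of $v_1$ and $v_2$) to the rest of $D_1$ and $D_2$, but $v$ is \emph{not} adjacent to $u_1$ or $u_2$. For part (a), suppose $\overr\chi(D)=k$ with a $k$-coloring $\varphi$, and for $i\in\{1,2\}$ let $\varphi_i$ be the coloring of $D_i$ obtained by restricting $\varphi$ and setting $\varphi_i(v_i)=\varphi(v)$. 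If neither $\varphi_1$ nor $\varphi_2$ is a proper $k$-coloring, then since $D_i$ minus one of the two arcs of the digon $\{u_i,v_i\}$ embeds into $D$, we get a monochromatic directed cycle $C_1$ in $D_1$ using an arc between $u_1$ and $v_1$, and likewise a monochromatic directed cycle $C_2$ in $D_2$ using an arc between $u_2$ and $v_2$; by choosing the right orientations we may assume $u_1v_1\in A(C_1)$ and $v_2u_2\in A(C_2)$ (the digon gives us both directions to work with). Then $C_1\cup C_2-u_1v_1-v_2u_2+u_1u_2$ is a monochromatic directed cycle in $D$ (here $u_1u_2\in A(D)$ since the bidirected join adds both $u_1u_2$ and $u_2u_1$), a contradiction; hence $\overr\chi(D)\geq\min\{\overr\chi(D_1),\overr\chi(D_2)\}$.

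For part (b), with $\overr\chi(D_1)=\overr\chi(D_2)=k$ we already have $\overr\chi(D)\geq k$ by (a), so it remains to show $\overr\chi(D)\leq k$. Take $k$-colorings $\varphi_i$ of $D_i$; permuting colors we may assume $\varphi_1(v_1)=\varphi_2(v_2)$, and then we also need $\varphi_1(u_1)\neq\varphi_1(v_1)$ and $\varphi_2(u_2)\neq\varphi_2(v_2)$, which holds automatically because $\{u_i,v_i\}$ is a digon in $D_i$. Define $\varphi$ on $V(D)$ by $\varphi_1$ on $V(D_1)$, $\varphi_2$ on $V(D_2)$, and $\varphi(v)=\varphi_1(v_1)$. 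A monochromatic directed cycle $C$ in $D$ must use an arc of the new digon between $u_1$ and $u_2$ (otherwise it lies entirely in $D_1$ or $D_2$, contradicting that $\varphi_i$ is proper); say $u_1u_2\in A(C)$. Since $\varphi(u_1)=\varphi(u_2)\neq\varphi(v)$, the cycle $C$ cannot pass through $v$, so $C$ enters $D_2$ at $u_2$ and must leave $D_2$ again through a vertex adjacent to $u_1$ in $D$ other than via $v$ — but in $D$ the only arcs from $V(D_2)\setminus\{v\}$ to $V(D_1)\setminus\{v\}$ are the two between $u_1$ and $u_2$, forcing $C$ to return along $u_2u_1$; then $C\cap D_2$ together with the arc $v_2u_2$ (or $u_2v_2$, matching the direction $C$ uses at $u_2$) is a monochromatic directed cycle in $D_2$, a contradiction. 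This gives $\overr\chi(D)=k$.

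For part (c), by (b) it suffices to show $\overr\chi(D-a)<k$ for every $a\in A(D)$. If $a$ is one of the two arcs between $u_1$ and $u_2$, choose $(k-1)$-colorings of $D_1-\{u_1v_1,v_1u_1\}$ and $D_2-\{u_2v_2,v_2u_2\}$ agreeing on $v_1$ and $v_2$ — these exist since $D_i$ is $k$-critical and the digon $\{u_i,v_i\}$ has exactly two arcs, at least one of whose deletion already drops the chromatic number, and a short argument (using criticality of $D_i$ on each arc of the digon) lets us also control the color of $v_i$ — and glue them; one checks the union has no monochromatic directed cycle through the surviving arc between $u_1$ and $u_2$ by the same analysis as in (b). Otherwise, by symmetry $a\in A(D_1)$; take a $(k-1)$-coloring $\varphi_1$ of $D_1-a$ and a $(k-1)$-coloring $\varphi_2$ of $D_2-\{u_2v_2,v_2u_2\}$ with $\varphi_1(v_1)=\varphi_2(v_2)$ (again available by criticality of $D_i$), and glue as before.

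\textbf{Main obstacle.} The one genuinely delicate point — the same one that arises in the undirected Haj\'os construction and in Theorem~\ref{theorem_Hajos-join} — is producing, for each arc of the digon $\{u_i,v_i\}$, a $(k-1)$-coloring of $D_i$ minus that arc in which $v_i$ receives a \emph{prescribed} color (so that the two sides can be merged at $v$); this needs that $D_i-\{u_iv_i\}$ and $D_i-\{v_iu_i\}$ are $(k-1)$-colorable with sufficient freedom at $v_i$, which follows from $k$-criticality of $D_i$ together with the observation that a $(k-1)$-coloring of the proper subdigraph $D_i$ minus one arc of the digon can be recolored (since $v_i$ still has the digon-partner $u_i$ forbidding one color but $k-1\geq 2$ leaves room) — and, on the upper-bound side, verifying that no monochromatic directed cycle can sneak through the new digon between $u_1$ and $u_2$ while avoiding $v$, which is exactly where the bidirectedness of both the removed pairs and the added pair is used. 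Everything else is a routine adaptation of the arguments already given for Theorem~\ref{theorem_Hajos-join}(a)--(c).
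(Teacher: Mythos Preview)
Your argument for part (b) has a genuine gap, and it is precisely the point where the paper says the hypothesis $k\geq 3$ is needed. After permuting so that $\varphi_1(v_1)=\varphi_2(v_2)$, you only observe $\varphi_i(u_i)\neq\varphi_i(v_i)$; you never arrange that $\varphi_1(u_1)\neq\varphi_2(u_2)$. Consequently the new digon on $\{u_1,u_2\}$ may itself be monochromatic, and your case analysis does not exclude this: when $C$ is exactly that digon, the object ``$C\cap D_2$ together with the arc $v_2u_2$'' is not a cycle at all (it consists of the single vertex $u_2$ plus one arc). Note that your argument for (b) never invokes $k\geq 3$, yet the statement is false for $k=2$ (the paper gives $\bd{C_4}\,\bd{\triangledown}\,\bd{C_4}=\bd{C_7}$), so something must be wrong. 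The paper's fix is simply to use the third colour: since $\varphi_1(u_1)\neq\varphi_1(v_1)=\varphi_2(v_2)\neq\varphi_2(u_2)$ and $k\geq 3$, one may permute $\varphi_2$ (fixing $\varphi_2(v_2)$) so that additionally $\varphi_1(u_1)\neq\varphi_2(u_2)$. Then any monochromatic cycle through the new digon is impossible outright, and the rest of your (b) argument becomes unnecessary.

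This gap propagates into (c). When $a\in A(D_1)$ you delete the \emph{entire} digon $\{u_2v_2,v_2u_2\}$ on the $D_2$ side; then nothing forces $\varphi_2(u_2)\neq\varphi_1(u_1)$, and again the digon $\{u_1,u_2\}$ (which survives in $D-a$) may be monochromatic. It is cleaner to delete only \emph{one} arc of the digon $\{u_2,v_2\}$: by criticality of $D_2$ any $(k-1)$-colouring $\varphi_2$ of $D_2-u_2v_2$ must satisfy $\varphi_2(u_2)=\varphi_2(v_2)$ (otherwise $\varphi_2$ would properly colour all of $D_2$), whence $\varphi_2(u_2)=\varphi(v)\neq\varphi_1(u_1)$ automatically. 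A similar choice of which arc to delete on each side handles $a\in\{u_1u_2,u_2u_1\}$. Finally, a small slip in (a): it is $D_i$ minus \emph{both} arcs of the digon $\{u_i,v_i\}$ that embeds in $D$, not $D_i$ minus one arc; but your conclusion there is still salvageable, since if both $\varphi_i$ fail then $\varphi(u_1)=\varphi(v)=\varphi(u_2)$ and the new digon $\{u_1,u_2\}$ is itself a monochromatic cycle in $D$.
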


Note that for the proof  of statement~(b), we use the fact that $k \geq 3$ and so we can choose $\varphi_1$ and $\varphi_2$ such that $\varphi_1(v_1)=\varphi_2(v_2)$ and $\varphi_1(u_1) \neq \varphi_2(u_2)$.
For $k=2$, the statement is not true: for example, $\bd{C_4} \bd{\triangledown} \bd{C_4}= \bd{C_7}$, whereas $\overr{\chi}(\bd{C_4})=2 \neq 3 = \overr{\chi}(\bd{C_7})$.
The same trick works for statement~(c).

For the proof of his Theorem, Urquhart even used  a more restricted class of constructible (undirected) graphs than the class of Haj\'os-$k$-constructible graphs, which originally was introduced by Ore \cite[Chapter 11]{Ore67}. Transferred to digraphs, we get the following. Let $D_1$ and $D_2$ be two vertex-disjoint digraphs, let $u_1v_1$ be an arc of $D_1$, and let $v_2u_2$ be an arc of $D_2$. Furthermore, let $\iota: A_1 \to A_2$ be a bijection with $A_i \subseteq V(G_i - v_i)$ for $i \in \{1,2\}$ and $\iota(u_1) \neq u_2$. Let $D$ be the digraph obtained from $(D_1,v_1,u_1)\triangledown (D_2,v_2,u_2)$ by identifying $w$ with $\iota(w)$ for each $w \in A_1$. Then, $D$ is a \textbf{directed Ore join} of $D_1$ and $D_2$ and we write $D=(D_1,v_1,u_1)\triangledown^o_\iota (D_2,v_2,u_2)$. Similar, if $u_1,v_1 \in V(D_1)$ and $u_2,v_2 \in V(D_2)$ are vertices such that $D_i[\{u_i,v_i\}]$ is a digon for $i \in \{1,2\}$ and if $\iota$ is the bijection from above, then the digraph $D$ obtained from $ (D_1,v_1,u_1) \bd{\triangledown} (D_2,v_2,u_2)$ by identifying  $w$ with $\iota(w)$ for each $w \in A_1$ is a \textbf{bidirected Ore join} of $D_1$ and $D_2$ and we write $D=(D_1,v_1,u_1)\bd{\triangledown}^o_\iota (D_2,v_2,u_2)$.

 We define the class of \textbf{Ore-$k$-constructible} digraphs as the smallest family of digraphs that contains $\bd{K_k}$ and is closed under (directed and bidirected) Ore joins. The proof of Theorem~\ref{theorem_Hajos-constructible} immediately implies the following theorem (see \cite{Ore67} for the undirected analogue). In particular, here we do not need any bidirected Ore joins.

\begin{theorem}
Let $k \geq 3$ be an integer. A digraph has chromatic number at least $k$ if and only if it contains an Ore-$k$-constructible subdigraph.
\end{theorem}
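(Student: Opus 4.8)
The plan is to reuse the proof of Theorem~\ref{theorem_Hajos-constructible} almost verbatim; the only genuinely new point is the observation that the two elementary operations it uses — a directed Haj\'os join followed by a batch of identifications of independent vertices — together form precisely a directed Ore join. So first I would record that every Ore-$k$-constructible digraph is Haj\'os-$k$-constructible. Indeed, unravelling the definitions, the directed (resp.\ bidirected) Ore join $(D_1,v_1,u_1)\triangledown^o_\iota(D_2,v_2,u_2)$ is obtained from the directed (resp.\ bidirected) Haj\'os join $(D_1,v_1,u_1)\triangledown(D_2,v_2,u_2)$ by identifying, for each $w\in A_1$, the two vertices $w$ and $\iota(w)$; since $w\in V(D_1)\setminus\{v_1\}$, $\iota(w)\in V(D_2)\setminus\{v_2\}$, the only arc added between $V(D_1)\setminus\{v_1\}$ and $V(D_2)\setminus\{v_2\}$ in a Haj\'os join is $u_1u_2$, and $\iota(u_1)\neq u_2$, each pair $\{w,\iota(w)\}$ is independent; moreover these pairs are pairwise disjoint, so the identifications can be performed one after another, each of them being an identification of independent vertices. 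Hence the Ore-$k$-constructible digraphs form a subclass of the Haj\'os-$k$-constructible digraphs, and the ``if''-direction is immediate from Theorem~\ref{theorem_Hajos-constructible}.

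For the ``only if''-direction I would argue by contradiction, following the proof of Theorem~\ref{theorem_Hajos-constructible} step by step. Start from a digraph with chromatic number at least $k$ and no Ore-$k$-constructible subdigraph, and add arcs greedily while keeping this property; since adding all missing arcs yields a complete digraph on at least $k$ vertices, which contains the Ore-$k$-constructible digraph $\bd{K_k}$, this process stops at a digraph $D$ with $\overr{\chi}(D)\geq k$, no Ore-$k$-constructible subdigraph, but such that for every $a\in A(\overline{D})$ the digraph $D+a$ contains an Ore-$k$-constructible subdigraph $D_a$ with $a\in A(D_a)$ (otherwise $D_a\subseteq D$). Define $u\sim v$ to mean $uv\notin A(D)$. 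If $\sim$ is transitive, the argument of Case~1 of the proof of Theorem~\ref{theorem_Hajos-constructible} goes through unchanged: $D$ is a semicomplete multipartite digraph in which between any two parts either all arcs are bidirected or all point the same way, hence $D$ has no filled odd hole, no filled odd antihole and no induced directed cycle of length at least $3$, so $D$ is perfect by Theorem~\ref{theorem_perfect-digraph} and $\omega(D)=\overr{\chi}(D)\geq k$; thus $\bd{K_k}\subseteq D$, contradicting the choice of $D$ because $\bd{K_k}$ is Ore-$k$-constructible.

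If $\sim$ is not transitive, pick $u,v,w$ with $uv\notin A(D)$, $vw\notin A(D)$ and $uw\in A(D)$; these three vertices are pairwise distinct (forced by $uw\in A(D)$ together with the two non-adjacencies). By maximality there are Ore-$k$-constructible subdigraphs $D_{uv}\subseteq D+uv$ with $uv\in A(D_{uv})$ and $D_{vw}\subseteq D+vw$ with $vw\in A(D_{vw})$. Let $D'$ be the digraph obtained from vertex-disjoint copies of $D_{uv}$ and $D_{vw}$ by deleting the copies of the arcs $uv$ and $vw$, identifying the two copies of $v$, adding the arc $uw$, and finally identifying the two copies of every vertex lying in both $D_{uv}$ and $D_{vw}$. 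As in the Haj\'os case $D'$ is a subdigraph of $D$ (because $D_{uv}-uv$ and $D_{vw}-vw$ are subdigraphs of $D$ and $uw\in A(D)$). The point to check is that $D'$ is in fact a directed Ore join, namely $(D_{uv},v,u)\triangledown^o_\iota(D_{vw},v,w)$, where $A_1$ consists of the vertices common to $D_{uv}$ and $D_{vw}$ other than $v$ and $\iota$ sends each such vertex of $D_{uv}$ to its copy in $D_{vw}$: indeed $uv$ is an arc of $D_{uv}$ from $u$ to $v$ and $vw$ an arc of $D_{vw}$ from $v$ to $w$, $A_1$ and its image avoid $v$, and $\iota(u)\neq w$ since $u\neq w$. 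Since the class of Ore-$k$-constructible digraphs is closed under directed Ore joins, $D'$ is Ore-$k$-constructible, contradicting the choice of $D$.

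I expect the only delicate point to be this last piece of bookkeeping in Case~2: verifying that the triple $(u,v,w)$ and the overlap of $D_{uv}$ and $D_{vw}$ satisfy all the side conditions in the definition of the directed Ore join (arcs in the required directions, $A_i\subseteq V(D_i-v_i)$, $\iota(u_1)\neq u_2$, disjointness of the identified pairs), and — for the ``if''-direction — that the identifications hidden inside an Ore join are genuinely identifications of independent vertices. Neither presents a real difficulty once the objects are named correctly; in particular Case~1 needs no new ideas whatsoever, and, as the paper remarks, no bidirected Ore join is ever required.
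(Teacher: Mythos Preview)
Your ``only if'' direction is exactly what the paper intends: it says explicitly that the proof of Theorem~\ref{theorem_Hajos-constructible} immediately yields this result, and your reconstruction of Case~2 --- recognising that the combination of one directed Haj\'os join and the subsequent batch of identifications is precisely a single directed Ore join $(D_{uv},v,u)\triangledown^o_\iota(D_{vw},v,w)$ --- is the right observation, including the side conditions you check. The paper also remarks that no bidirected Ore joins are needed here, which is what your Case~2 shows.

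There is, however, a small slip in your ``if'' direction. You argue that every Ore-$k$-constructible digraph is Haj\'os-$k$-constructible by unravelling the definitions; this is fine for \emph{directed} Ore joins, but for \emph{bidirected} Ore joins your decomposition yields a bidirected Haj\'os join followed by identifications --- and the bidirected Haj\'os join is \emph{not} one of the operations defining the class of Haj\'os-$k$-constructible digraphs (only the directed Haj\'os join and identification of independent vertices are). So the inclusion ``Ore-$k$-constructible $\subseteq$ Haj\'os-$k$-constructible'' is not established by what you wrote. The fix is immediate and is in fact simpler than your detour: just argue directly, by induction on the construction and using Theorem~\ref{theorem_Hajos-join}(a), Theorem~\ref{theorem_bidirected-Hajos-join}(a) and $\overr{\chi}(D/I)\geq\overr{\chi}(D)$, that every Ore-$k$-constructible digraph has chromatic number at least $k$. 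That is all the ``if'' direction requires, and it mirrors the one-line justification given in the proof of Theorem~\ref{theorem_Hajos-constructible}.
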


Urquhart proved the following, thereby answering a conjecture by Hanson, Robinson and Toft~\cite{HaRoTo86} (the conjecture was also proposed by Jensen and Toft in their book on graph coloring problems \cite{JeTo95}). Recall that the Ore join of two undirected graphs is done via an undirected Haj\'os join and identification afterwards.

\begin{theorem} Let $k \geq 3$ be an integer. For a graph $G$ the following conditions are equivalent:
\begin{itemize}
\item[\upshape (a)] $G$ is Ore-$k$-constructible.
\item[\upshape (b)] $G$ is Haj\'os-$k$-constructible.
\item[\upshape (c)] The chromatic number of $G$ satisfies $ \chi(G) \geq k$.
\end{itemize}
\end{theorem}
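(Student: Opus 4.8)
The plan is to establish the cycle of implications $(a)\Rightarrow(b)\Rightarrow(c)\Rightarrow(a)$; only the last one carries real content, being the theorem of Urquhart \cite{Ur97} that answers the conjecture of Hanson, Robinson and Toft \cite{HaRoTo86}. For $(a)\Rightarrow(b)$ I would simply unwind the definition of the (undirected) Ore join: such a join of $G_1$ and $G_2$ is obtained from their Haj\'os join by identifying, for each $w$ in the domain of $\iota$, the vertex $w\in V(G_1-v_1)$ with $\iota(w)\in V(G_2-v_2)$. In the Haj\'os join the only edge between $V(G_1-v_1)$ and $V(G_2-v_2)$ is the new edge $u_1u_2$, and $\iota(u_1)\neq u_2$; hence every identified pair is non-adjacent, so an Ore join is a Haj\'os join followed by a sequence of identifications of (two-element) independent sets. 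Since $K_k$ is Haj\'os-$k$-constructible and the Haj\'os-$k$-constructible graphs are closed both under Haj\'os joins and under identification of independent vertices, every Ore-$k$-constructible graph is Haj\'os-$k$-constructible.

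For $(b)\Rightarrow(c)$ I would induct on the construction. One has $\chi(K_k)=k$. If $G$ is the Haj\'os join of $G_1,G_2$ with $\chi(G_1),\chi(G_2)\geq k$, then $\chi(G)\geq k$ — the argument is the undirected analogue of that for Theorem~\ref{theorem_Hajos-join}(a): in a proper colouring $\varphi$ of $G$ with fewer than $k$ colours the restriction to $G_i$ would have to be improper, hence $\varphi(u_i)=\varphi(v_i)=\varphi(v)$ for $i\in\{1,2\}$, forcing $\varphi(u_1)=\varphi(u_2)$ and contradicting the edge $u_1u_2$. And if $G=G'/I$ for an independent set $I$ of a Haj\'os-$k$-constructible graph $G'$, then $\chi(G)\geq\chi(G')\geq k$. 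Hence every Haj\'os-$k$-constructible graph has chromatic number at least $k$.

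The substance of the theorem is $(c)\Rightarrow(a)$. I would begin with the undirected Haj\'os/Ore theorem (\cite{Ha61}, \cite{Ore67}): since $\chi(G)\geq k$, the graph $G$ contains an Ore-$k$-constructible subgraph $H$, on some vertex set $W\subseteq V(G)$ with $|W|\geq k$. It then suffices to prove that the class of Ore-$k$-constructible graphs is closed under two elementary operations: (i) adding a new isolated vertex, and (ii) adding a single edge. Given (i) and (ii), one grows $H$ up to $G$: first add isolated vertices until the vertex set is all of $V(G)$, then add, one at a time, the edges of $G$ that are not yet present; every intermediate graph is Ore-$k$-constructible and the final one is $G$. (Closure under adding a vertex with an arbitrary prescribed neighbourhood follows from (i) and (ii), so one may equivalently attach the new vertices together with their $G$-edges.)

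I expect the proof of the closure lemmas (i) and (ii) to be the main obstacle. A single Ore join with a copy of $K_k$ achieves neither: the join forces a new edge $u_1u_2$ and the complete gadget that is glued on carries further adjacencies that must be neutralised, so even building $K_{k+1}$ from two copies of $K_k$ needs care (the naive join yields $K_{k+1}$ with one edge deleted). The route I would follow is Urquhart's: first show that $K_n$ is Ore-$k$-constructible for every $n\geq k$ and that the class is closed under the Dirac join ($\boxplus$) with a complete graph, and then use these ``complete'' gadgets — whose vertices may be identified at will with prescribed vertices of the graph being built — to realise the isolated-vertex and single-edge additions while keeping every created adjacency under control. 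This is a matter of organising the identifications correctly rather than of a single clever idea, and it is the step where the hypothesis $k\geq 3$ is used.
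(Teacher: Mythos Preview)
The paper does not give its own proof of this statement: it is quoted as Urquhart's theorem \cite{Ur97} and then used, via Observation~\ref{observation_bidirected}, as input to the digraph analogue Theorem~\ref{theorem_Ore-join}. The relevant comparison is therefore with Urquhart's argument, which the paper reproduces (for digraphs) in the proof of Theorem~\ref{theorem_Ore-join}.

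Your implications $(a)\Rightarrow(b)\Rightarrow(c)$ are fine. For $(c)\Rightarrow(a)$ there is a real gap. You propose to show that the class $\mathcal{O}_k$ of all Ore-$k$-constructible graphs is closed under (i) adding an isolated vertex and (ii) adding a single edge, and then to grow an Ore-$k$-constructible subgraph $H\subseteq G$ up to $G$. But the gadget argument you sketch for (ii) only goes through when the current graph already contains a copy of $K_k$: the Ore join with a complete gadget pivots on an edge of that $K_k$, and the identifications are arranged so that the gadget's $K_k$ overlays the host's $K_k$, restoring the deleted pivot edge and leaving exactly one new edge behind (compare Claim~\ref{claim_new-edge} in the proof of Theorem~\ref{theorem_Ore-join}). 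If $H$ contains no $K_k$ --- and it need not: for $k=3$ take $G=C_5$, which is $3$-critical and triangle-free, so the only Ore-$3$-constructible subgraph of $G$ is $G$ itself --- there is no evident way to realise a single-edge addition by an Ore join, and your growing scheme stalls.

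Urquhart's actual route, mirrored in the proof of Theorem~\ref{theorem_Ore-join}, avoids this. One proves the closure lemmas only for the subclass $\mathcal{O}_k^*\subseteq\mathcal{O}_k$ of Ore-$k$-constructible graphs that contain a $K_k$; this already yields that every graph containing a $K_k$ lies in $\mathcal{O}_k$. The $K_k$-free case is then handled by a maximal-counterexample argument rather than by growing: if $G$ is edge-maximal with $\chi(G)\geq k$ and $G\notin\mathcal{O}_k$, then $G$ contains no $K_k$; either non-adjacency is an equivalence relation on $V(G)$, making $G$ complete multipartite with $\omega(G)=\chi(G)\geq k$ and hence $K_k\subseteq G$, a contradiction; or there exist $u,v,w$ with $uv,vw\notin E(G)$ but $uw\in E(G)$, whence $G+uv,\,G+vw\in\mathcal{O}_k$ by maximality and $G$ is an Ore join of disjoint copies of these two graphs, again a contradiction. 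Your gadget discussion is on target for the $\mathcal{O}_k^*$ closure, but it is the maximal-counterexample step --- not a ``grow from a subgraph'' step --- that finishes the proof.
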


Note that if $G$ is the Haj\'os join of two graphs $G_1$ and $G_2$, then $D(G)$ is the bidirected Haj\'os join of $D(G_1)$ and $D(G_2)$.  Furthermore, $\overr{\chi}(D(G)) = \chi(G)$ and so the above theorem immediately implies the following.

\begin{observation}
 Each bidirected graph with chromatic number at least $k \geq 3$ is Ore-$k$-constructible.
  \label{observation_bidirected}
 \end{observation}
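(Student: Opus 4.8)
The plan is to read the observation off the preceding theorem of Urquhart together with the fact that the ``make bidirected'' operation $D(\cdot)$ is compatible with all the operations involved. So let $G$ be an undirected (simple) graph with $\chi(G)\ge k\ge 3$. By Urquhart's theorem, $G$ is Ore-$k$-constructible, which means there is a construction sequence $G_1,G_2,\dots,G_n=G$ in which each $G_i$ is either a copy of $K_k$ or the undirected Ore join $(G_j,v_1,u_1)\triangledown^o_\iota(G_\ell,v_2,u_2)$ of two earlier terms $G_j,G_\ell$ with $j,\ell<i$. I would then prove by induction along this sequence that $D(G_i)$ is an Ore-$k$-constructible digraph; taking $i=n$ gives the claim, since $\overr{\chi}(D(G))=\chi(G)\ge k$ and $D(G)$ is exactly the digraph we want to exhibit as Ore-$k$-constructible.

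The base case is immediate: $D(K_k)=\bd{K_k}$, which is Ore-$k$-constructible by definition. For the inductive step the workhorse is the translation lemma: if $H=(H_1,v_1,u_1)\triangledown^o_\iota(H_2,v_2,u_2)$ is an undirected Ore join, then $D(H)=(D(H_1),v_1,u_1)\bd{\triangledown}^o_\iota(D(H_2),v_2,u_2)$ is the corresponding bidirected Ore join. This breaks into two pieces that the paper has essentially already noted. First, $D$ of an undirected Haj\'os join is the bidirected Haj\'os join of the $D$-images: deleting the edge $u_iv_i$ of $H_i$ corresponds to deleting the two arcs between $u_i$ and $v_i$ of $D(H_i)$, and adding the edge $u_1u_2$ corresponds to adding the two arcs $u_1u_2,u_2u_1$; moreover $D(H_i)$ really does contain the digon on $\{u_i,v_i\}$ required to form the bidirected Haj\'os join, precisely because $u_iv_i\in E(H_i)$. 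Second, $D(H/\{w,w'\})=D(H)/\{w,w'\}$ whenever $w,w'$ are non-adjacent in $H$, because collapsing two non-adjacent vertices replaces their two neighbourhoods by the union, which in the symmetric (bidirected) setting means joining the new vertex by a digon to each vertex of $N_H(w)\cup N_H(w')$. Chaining the first piece with one application of the second piece for each pair identified by $\iota$ yields the lemma, hence $D(G_i)$ is an Ore-$k$-constructible digraph by the induction hypothesis.

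The only thing that needs genuine checking is that the side conditions of the two constructions match up. I would verify that the pairs $\{w,\iota(w)\}$ with $w$ in the domain of $\iota$ are non-adjacent in the Haj\'os join underlying the Ore join: $H_1$ and $H_2$ are vertex-disjoint apart from the identified vertex and the only new edge is $u_1u_2$, while the hypothesis $\iota(u_1)\ne u_2$ rules out $\{w,\iota(w)\}=\{u_1,u_2\}$. I would also check that these pairs stay distinct and non-adjacent through the successive identifications, which holds because each identification merges a vertex on the $H_1$-side with a vertex on the $H_2$-side, so it neither merges nor connects two vertices that are still to be identified. Finally, the digon condition on $D(H_i)[\{u_i,v_i\}]$ and the condition $\iota(u_1)\ne u_2$ are exactly the hypotheses required to form the bidirected Ore join, so everything is consistent. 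This bookkeeping is the ``hard'' part of the argument, but it is entirely routine and there is no substantive obstacle, which is why the paper states the observation as an immediate consequence.
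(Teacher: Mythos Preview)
Your proposal is correct and follows exactly the route the paper takes: the paper derives the observation in one sentence from Urquhart's theorem by noting that $D(\cdot)$ turns an undirected Haj\'os join into a bidirected Haj\'os join and that $\overr{\chi}(D(G))=\chi(G)$, and your argument is simply the careful unpacking of that sentence (including the identification step needed to pass from Haj\'os joins to Ore joins and the bookkeeping on side conditions). There is no substantive difference in approach.
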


 Now we have all the tools that we need in order to prove our Urquhart-type theorem.

\begin{theorem} \label{theorem_Ore-join}
Let $k \geq 3$ be an integer. A digraph has chromatic number at least $k$ if and only if it is Ore-$k$-constructible.
\end{theorem}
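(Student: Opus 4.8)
The plan is to prove the non-trivial direction: every digraph $D$ with $\overr{\chi}(D)\geq k$ is Ore-$k$-constructible. (The converse follows from Theorem~\ref{theorem_bidirected-Hajos-join}(b), the analogous fact for directed Ore joins, and the observation that the identification step in an Ore join cannot decrease $\overr{\chi}$.) As with Urquhart's original argument, I would induct on $|D|$ (with a secondary induction on the number of non-arcs, i.e.\ on $|A(\overline{D})|$), and it suffices to handle the case where $D$ is \emph{edge-critical} in the sense that $\overr{\chi}(D)=k$ and $\overr{\chi}(D-a)=k-1$ for every arc $a$; indeed if some $D-a$ still has chromatic number $k$ we recurse on it, and if $D$ has chromatic number $>k$ we may recurse on a $k$-critical subdigraph.

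The core is to reduce from ``contains an Ore-$k$-constructible subdigraph'' (which we have by Theorem~\ref{theorem_Hajos-constructible}/its Ore-join variant) to ``is itself Ore-$k$-constructible.'' Following Urquhart, I would consider a maximal (under arc-addition) counterexample $D$: it is not Ore-$k$-constructible, but $D+a$ contains an Ore-$k$-constructible subdigraph using $a$, for every $a\in A(\overline{D})$. Writing $u\sim v$ for ``$uv\notin A(D)$'', I split into the case where $\sim$ is transitive and the case where it is not, exactly as in the proof of Theorem~\ref{theorem_Hajos-constructible}. In the transitive case $D$ is semicomplete multipartite and the Andres--Hochst\"attler theorem forces $D$ to contain a $\bd{K_m}$ with $m\geq k$; the new ingredient here, versus the Haj\'os-constructible statement, is that I must show the whole of $D$ (not just a sub-$\bd{K_k}$) is Ore-$k$-constructible. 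For this I would invoke Observation~\ref{observation_bidirected} on the symmetric part together with handling the one-way arcs between parts — showing that adding an independent set's worth of vertices and then one-way arcs can be realized by Ore joins against copies of $\bd{K_k}$ (absorbing extra vertices into the $A_i$ sets of the Ore join, whose freedom is precisely what distinguishes the Ore join from the plain Haj\'os join). In the non-transitive case, pick $u,v,w$ with $uv,vw\notin A(D)$ but $uw\in A(D)$; glue the two Ore-$k$-constructible digraphs $D_{uv}\subseteq D+uv$ and $D_{vw}\subseteq D+vw$ by a \emph{directed} Haj\'os join at the arcs $uv$, $vw$, and then use the bijection $\iota$ of the Ore join to identify all vertices common to both pieces — so the result is by definition a single directed Ore join, hence Ore-$k$-constructible, and it sits inside $D$, contradiction.

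The main obstacle I anticipate is the transitive case: in the Haj\'os-constructible theorem it was enough to pull out a $\bd{K_k}$, but now I genuinely need to build all of $D$. Concretely, I must argue that a semicomplete multipartite digraph $D$ with $\omega(D)=\overr{\chi}(D)\ge k$ whose between-part arcs are either bidirected or uniformly one-directional can be assembled by Ore joins from $\bd{K_k}$'s. I would proceed by induction on $|D|$: peel off a vertex $x$ in a part of size $\ge 2$, or handle one-way arcs by the standard trick (a one-way arc $uw$ with $wu\notin A(D)$ is ``harmless'' — it can be removed without lowering $\overr{\chi}$ below $k$ in the semicomplete multipartite setting, then re-introduced via an Ore join where $w$ plays the role of one of the $u_i$), reducing to a bidirected graph where Observation~\ref{observation_bidirected} applies; the bookkeeping of which vertices get identified in each Ore join, and verifying $\iota(u_1)\neq u_2$ can always be arranged (this is where $k\ge 3$ and $\omega\ge k$ give enough room), is the delicate part. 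The rest — the non-transitive case and the inductive wrapper — is a direct transcription of Urquhart's argument and of the proof of Theorem~\ref{theorem_Hajos-constructible} already given, with ``Haj\'os join'' replaced by ``Ore join.''
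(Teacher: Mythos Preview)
Your overall architecture --- maximal counterexample on arc count, then split on whether the relation $u\sim v \Leftrightarrow uv\notin A(D)$ is transitive --- is the paper's. But a recurring conflation in your proposal between ``contains an Ore-$k$-constructible subdigraph'' and ``is itself Ore-$k$-constructible'' breaks both your preliminary reduction and the non-transitive case. You write that it suffices to treat edge-critical $D$ because ``if some $D-a$ still has chromatic number $k$ we recurse on it'' --- but knowing $D-a$ is Ore-$k$-constructible does not by itself tell you $D$ is. Likewise, in the non-transitive case you draw from maximality only that $D+uv$ and $D+vw$ \emph{contain} Ore-$k$-constructible subdigraphs $D_{uv},D_{vw}$, and then conclude the Ore join ``sits inside $D$''; that is the Haj\'os-constructible conclusion, not the Ore one. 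The correct consequence of arc-maximality is that $D+uv$ and $D+vw$ are \emph{themselves} in $\mathcal{O}_k$, and the directed Ore join of two disjoint copies of them (with $\iota$ identifying every common vertex) yields $D$ exactly.

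For the transitive case, the paper does not run an ad-hoc induction on semicomplete multipartite digraphs. It isolates and proves the lemma that drives everything: \emph{every digraph containing a $\bd{K_k}$ is Ore-$k$-constructible}. This is established by first building, via explicit sequences of directed and bidirected Ore joins, three gadgets: $\bd{K_k}$ together with an isolated vertex (from Observation~\ref{observation_bidirected}), $\bd{K_k}+a$ (two new vertices joined by a single arc), and $\bd{K_k}+\overr{v}$, $\bd{K_k}+\overl{v}$ (one new vertex joined to the clique by a single arc). One then shows that if $D\in\mathcal{O}_k$ contains a fixed $\bd{K_k}$, a bidirected Ore join of $D$ with the appropriate gadget along that $\bd{K_k}$ produces $D$ plus an isolated vertex, or $D$ plus any prescribed missing arc, still containing a $\bd{K_k}$; iterating gives the lemma. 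The transitive case is then immediate, since (as in the proof of Theorem~\ref{theorem_Hajos-constructible}) $D$ is perfect and hence contains a $\bd{K_k}$. Your plan to strip one-way arcs, invoke Observation~\ref{observation_bidirected}, and ``re-introduce them via an Ore join where $w$ plays the role of one of the $u_i$'' would require exactly these gadget constructions to be made rigorous --- the ``bookkeeping'' you defer is in fact the main technical content of the proof.
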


\begin{proof}
It immediately follows from Theorem~\ref{theorem_Hajos-join}(a) and Theorem~\ref{theorem_bidirected-Hajos-join}(a) that each Ore-$k$-constructible digraph has chromatic number at least $k$.

Thus, it suffices to show that each digraph with chromatic number $\geq k$ is Ore-$k$-constructible. We will do this via a sequence of claims. In the following, we will denote by $\bd{K_k} + \overr{v}$ (respectively $\bd{K_k} +  \overl{v}$) the digraph that results from $\bd{K_k}$ by adding a vertex $v$ and the arc $uv$ (respectively $vu$) for some vertex $u$ of the $\bd{K_k}$. Moreover, let $\bd{K_k} + a$ be the digraph that results from $\bd{K_k}$ by adding two new vertices $u,v$ and the arc $a=uv$. Finally, $\mathcal{O}_k$ denotes the class of Ore-$k$-constructible digraphs and $\mathcal{O}_k^*$ denotes the  class of Ore-$k$-constructible digraphs containing a $\bd{K_k}$. It follows from Observation~\ref{observation_bidirected} that
\begin{claim}
The digraph obtained from $\bd{K_k}$ by adding an isolated vertex belongs to $\mathcal{O}_k^*$.
\end{claim}
\begin{claim} \label{claim_K+a}
The digraph $\bd{K_k} + a$ belongs to $\mathcal{O}_k^*$.
\end{claim}

\begin{proof2}
It is clear that $\bd{K_k} + a$ still contains the $\bd{K_k}$. We claim that $\bd{K_k} + a$ is Ore-constructible. To this end, let $D_1$ (respectively $D_2$) be the bidirected graph obtained by identifying a vertex of $\bd{K_k}$ to a vertex of a disjoint $\bd{K_2}$ (respectively $\bd{K_3}$). More formally, $V(D_1)=\{v_1,v_2,\dots,v_k,u\}$, $V(D_2)=\{v'_1,v_2',\dots,v'_k,u_1,u_2\}$, $A(D_1)=\{v_iv_j ~|~ i\neq j\}\cup\{v_1u,uv_1\}$ and $A(D_2)=\{v'_iv'_j ~|~ i\neq j\}\cup\{v'_1u_1,v'_1u_2,u_1v'_1,u_2v'_1,$ $u_1u_2,u_2u_1\}$ (see Figure \ref{figG1G2}). Let $\iota$ be the bijection with $\iota(v_i)=v'_i$ for all $i \in \{1,2,\ldots,k\}$ and let $D'_2=(D_1,u,v_1) \triangledown^o_\iota (D_2,u_2,u_1)$ (see Figure \ref{figure_K_k+a}(a) and (b)).  This Ore-join leads to the digraph $D'_2=D_2-u_2u_1$ (see Figure \ref{figure_K_k+a}(c)). By $v_i^*$ we denote the vertex that results from identifying $v_i$ with $\iota(v_i)=v_i'$.
\begin{figure}[h!]
\vspace{-.3cm}
\begin{subfigure}{\linewidth}
\centering
\includegraphics[width=.5\textwidth]{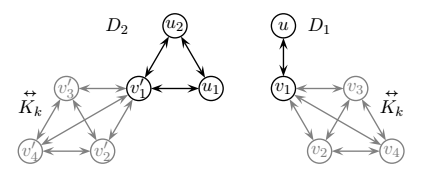}
\caption{The digraphs $D_2$ (on the left) and $D_1$ (on the right).}
\label{figG1G2}
\end{subfigure}
\end{figure}\\

\begin{figure}[h!]\ContinuedFloat
 \begin{subfigure}{\linewidth}
\centering
\includegraphics[width=.5\textwidth]{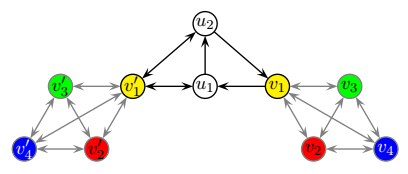}
\caption{The graph we obtain after the directed Hajós join. We then identify the vertices of the same colour.}
\label{figjoin1}
\end{subfigure}\\
\vspace{-.3cm}
 \begin{subfigure}{\linewidth}
\centering
\includegraphics[width=.3\linewidth]{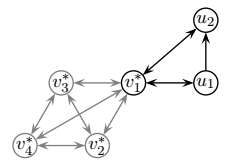}
\caption{This leads to the graph $D'_2$.}
\label{figG'2}
\end{subfigure}
\end{figure}

 Now we take a new copy of $D_1$, define $\iota'$ to be the bijection with $\iota'(v^*_i)=v_{i+1}$ for all $i \in \{1,2,\ldots,k\}$ (where $v_{k+1}=v_1$), and set $D''=(D',u_1,v^*) \bd{\triangledown}^o_{\iota'} (D_1,u,v_1)$ (see Figure \ref{figure_K_k+a}(d)(e)). Still, let $v_i^*$ denote the vertex that results from identifying $v^*_i$ with $\iota'(v^*_i)$.
\begin{figure}[!h]\ContinuedFloat
\begin{subfigure}{0.58\linewidth}
\centering
\includegraphics[width=.7\linewidth]{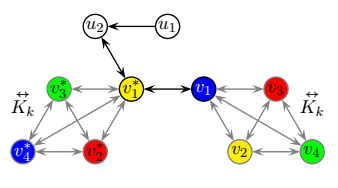}
\caption{}
\label{figG1G'2}
\end{subfigure}
 \begin{subfigure}{0.38\linewidth}
\centering
\includegraphics[width=.6\linewidth]{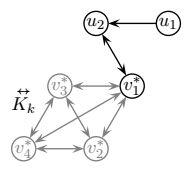}
\caption{The graph $D''_2$.}
\label{figG''2}
\end{subfigure}\\
\end{figure}

 Finally, we take another copy of $G_1$, set $\iota''(v^*_i)=v_{i+1}$ for $i \in \{1,2,\ldots,k\}$ (where $v_{k+1}=v_1$) and perform the Ore join $(D_2'',u_2,v^*)\bd{\triangledown}^o_{\iota''}(G_1,v_1,u)$ (see Figure \ref{figure_K_k+a}(f)(g)). This gives us the digraph $K_k + u_1u_2$ as required.
%

\begin{figure}[!h]\ContinuedFloat
 \begin{subfigure}{0.58\linewidth}
\centering
\includegraphics[width=.7\linewidth]{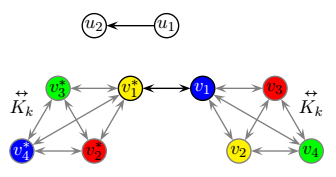} 
\caption{}
\label{figG1G''2}
\end{subfigure}
\begin{subfigure}{0.38\linewidth}
\centering
\includegraphics[width=.6\linewidth]{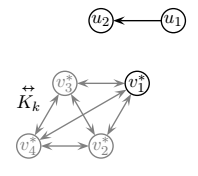} 
\caption{}
\end{subfigure}
\caption{The construction of Claim 2.}
\label{figure_K_k+a}
\end{figure}
\end{proof2}

By using a similar construction starting with the graph that results from $\bd{K_k}$ by adding a vertex $v$ and joining it to two vertices of $\bd{K_k}$ by arcs in both directions we obtain the following. For the exact construction see Figure~\ref{fig_claim_K+v}.

\begin{claim} \label{claim_K+v}
The digraphs $\bd{K_k} + \overr{v}$ and $\bd{K_k} + \overl{v}$ are in $\mathcal{O}_k^*$.
\end{claim}

 \begin{figure}[!h]
 \begin{subfigure}{\linewidth}
\centering
\includegraphics[width=.4\linewidth]{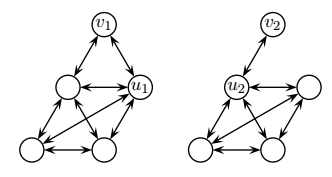}
\caption{We start by performing a directed Haj\'os join between the two depicted graphs.}
\end{subfigure}

\begin{subfigure}{0.58\linewidth}
\centering
\includegraphics[width=.6\linewidth]{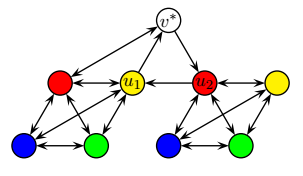}
\caption{Afterwards, we identify the vertices of the same color.}
\end{subfigure}
 \begin{subfigure}{0.38\linewidth}
\centering
\includegraphics[width=.5\linewidth]{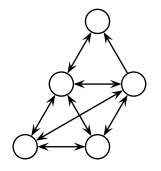}
\caption{End of the first step.}
\end{subfigure}
\end{figure} 

 \begin{figure}[!h]\ContinuedFloat
 \begin{subfigure}{\linewidth}
\centering
\includegraphics[width=.4\linewidth]{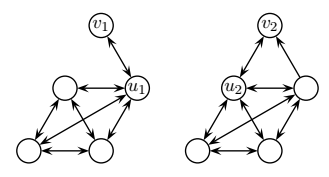}
\caption{Now we perform a bidirected Haj\'os join.}
\end{subfigure}

\begin{subfigure}{0.58\linewidth}
\centering
\includegraphics[width=.6\linewidth]{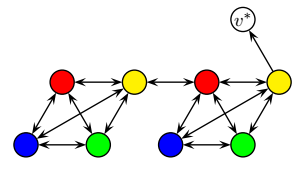}
 \caption{Again we identify vertices of the same color.}
\end{subfigure} 
 \begin{subfigure}{0.38\linewidth}
 \centering
\includegraphics[width=.5\linewidth]{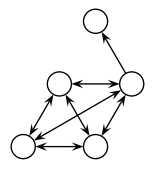}
\caption{This gives us the digraph $K_k+\overr{v}$.}
\end{subfigure}
\caption{The construction of Claim~\ref{claim_K+v}.}
\label{fig_claim_K+v}
\end{figure}

From now on, we may argue similar to the original proof of Urquhart. The next claim can easily be deduced from Claim~1.
\begin{claim}
Let $D$ be a digraph belonging to $\mathcal{O}_k^*$. Then, the digraph $D'$ obtained from $D$ by adding an isolated vertex belongs to $\mathcal{O}_k^*$, too.
\label{claim_isolated-vertex}
\end{claim}

\begin{claim}
Let $D$ be a digraph belonging to $\mathcal{O}_k^*$ and let $a \in A(\overline{D})$. Then, the digraph $D + a$ belongs to $\mathcal{O}_k^*$, too.
\label{claim_new-edge}
\end{claim}
\begin{proof2}
Since $D \in \mathcal{O}_k^*$, there is a vertex set $X \subseteq V(D)$ such that $D[X]$ is a $\bd{K}_k$. We distuingish between two cases.

\smallskip
\noindent \case{1}{One end-vertex of the arc $a$ belongs to $X$.} Then, let $a=uv$ with $u \in X$ and $v \in V \setminus X$ (the case $a=vu$ can be done analogously). Furthermore, let $D'$ be a copy of $\bd{K_k} + \overr{v'}$ and let $u'$ be the vertex adjacent to $v'$ in $D'$.  Finally, let $w, z \in X \setminus\{u\}$ and let $w', z' \in D' \setminus \{u',v'\}$. By Claim~\ref{claim_K+v}, $D' \in \mathcal{O}_k$. Now let $\iota$ be a bijection from $(X\setminus \{u\}) \cup \{v\}$ to $(X'\setminus \{u'\}) \cup \{v'\}$ with $\iota(v) = v'$, $\iota(w)=z'$, and $\iota(z)=w'$. Then, $(D,u,w) \bd {\triangledown^o_\iota} (D',u',w') \in \mathcal{O}_k$  is a copy of $D+a$, and we are done. \\

\smallskip
\noindent\case{2}{No end-vertex of $a$ belongs to $X$.} Then, let $a=uv$,  and $D'$ be a copy of $\bd{K_k} + u'v'$. By Claim~\ref{claim_K+a}, $D'$ belongs to $\mathcal{O}_k$. Now let $x,y,z$ be three vertices from $X$ and let $\{x',y',z'\} \subseteq D' \setminus \{u,v\}$. Finally, let $\iota$ be a bijection from $X \setminus \{x\} \cup \{u,v\}$ to $D' \setminus \{x'\}$ with $\iota(u)=u'$, $\iota(v)=v'$, $\iota(y)=z'$, and $\iota(z)=y'$. Then, $(D,x,y) \bd{\triangledown}^o_\iota (D',x',y')\in \mathcal{O}_k$ is a copy of $D + a$ and the proof of the claim is complete.
\end{proof2}

It follows from Claims~\ref{claim_isolated-vertex} and \ref{claim_new-edge} that each digraph containing a $\bd{K_k}$ belongs to $\mathcal{O}_k^*$. The remaining part of the proof is by reductio ad absurdum. Let $D$ be a maximal counterexample in the sense that $\overr{\chi}(D) \geq k$, $D$ is not Ore-$k$-constructible, and $D$ has maximum number of edges with respect to this property. Then, $D$ does not contain a $\bd{K_k}$ and if $a \in A(\overline{D})$, $D+a$ belongs to $\mathcal{O}_k$. Now we argue as in the proof of Theorem~\ref{theorem_Hajos-constructible}. For two vertices  $u,v \in V(D)$, let $u \sim  v$ denote the relation that $uv \not \in A(D)$. If $\sim$ is transitive we again conclude that $D$ contains a $\bd{K_k}$, a contradiction. Hence, $\sim$ is not transitive and so there are vertices $u,v,w \in V(D)$ with $uv \not \in A(D)$, $vw \not \in A(D)$, but $uw \in A(D)$. Then, both digraphs $D+uv$ as well as $D+vw$ belong to $\mathcal{O}_k$ and $D$ is the Ore join of two disjoint copies of these two digraphs. Thus, $D$ belongs to $\mathcal{O}_k$, a contradiction.
\end{proof}

\section{A Gallai-type theorem for critical digraphs}\label{section_gallai}
\setcounter{claim}{0}
Let $D$ be a $k$-critical digraph. If $v \in V(D)$, then $D-v$ admits a $(k-1)$-coloring and,
since $\overr{\chi}(D)=k$, $v$ must have an out- and an in-neighbor in each color class of such a coloring. Hence, we have $k-1 \leq \min \{d_D^+(v),d_D^-(v)\}$ for every vertex $v \in V(D)$, which gives us a natural way to classify the vertices of $D$. We say that a vertex $v \in V(D)$ is a \textbf{low-vertex} of $D$ if $d_D^+(v)=d_D^-(v)=k-1$ and a \textbf{high vertex}of $D$, otherwise. Furthermore, let $D_L$ denote the digraph that is induced by the set of low vertices of $D$; we will call it the \textbf{low vertex subdigraph} of $D$. For undirected graphs, Gallai~\cite{Gal63a} proved that the low vertex subgraph has a specific structure. The next theorem transfers his result to digraphs.

\begin{theorem}\label{theorem_gallai}
Let $D_L$ be the low vertex subdigraph of a $k$-critical digraph $D$. Then, each block $B$ of $D_L$ satisfies one of the following statements.
\begin{itemize}
\item[\upshape (a)] $B$ consists of just one single arc.
\item[\upshape (b)] $B$ is a directed cycle of length $\geq 2$.
\item[\upshape (c)] $B$ is a bidirected cycle of odd length.
\item[\upshape (d)] $B$ is a complete bidirected graph.
 \end{itemize}
\end{theorem}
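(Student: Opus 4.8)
The strategy is to transfer Gallai's classical analysis of the low-vertex subgraph to the directed setting, with the key structural input being the Dirac/Haj\'os-type reductions available for critical digraphs. First I would isolate the main local obstruction: if $B$ is a block of $D_L$ that is not a single arc, not a directed cycle, not an odd bidirected cycle, and not a complete bidirected digraph, I want to derive a contradiction with $k$-criticality of $D$. The plan is to work in the underlying graph $G(D_L)$ of the block $B$ and to distinguish whether $B$ contains a digon or not. If $B$ is \emph{digon-free}, then every arc of $B$ is ``one-directional'', and I would argue that $B$ itself must be an Eulerian-like structure; combined with the fact that $B$ is $2$-connected (or a single arc) in the underlying sense, the aim is to show $B$ is forced to be a single directed cycle. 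If $B$ contains a digon, then I want to show $S(B)$, the symmetric part, carries all the relevant structure, and reduce to the undirected Gallai theorem applied to $S(B)$, upgrading ``odd cycle or complete graph'' to the digraph statements (c) and (d).

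The heart of the argument is the following coloring lemma, which is the directed analogue of Gallai's: if $B$ is a block of $D_L$ that is neither a directed cycle nor a bidirected complete digraph nor an odd bidirected cycle nor a single arc, then one can $(k-1)$-color $D$, contradicting $\overr{\chi}(D)=k$. To set this up I would use the standard fact recalled just before the theorem: for every vertex $v$, $k-1\le\min\{d^+_D(v),d^-_D(v)\}$, and low vertices are exactly those attaining equality on both sides. The plan is: take a ``bad'' block $B$, choose a vertex $v^*$ of $B$ which is a cut-vertex of $D_L$ or a boundary vertex, remove an appropriate set, $(k-1)$-color the rest by criticality, and then show the coloring extends to $B$ by a greedy/alternating-path argument. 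Concretely, in $D_L$ each low vertex has exactly $k-1$ out-neighbors and $k-1$ in-neighbors in all of $D$, so outside $B$ it has ``few'' neighbors, which is what creates slack for extending colorings; one leverages the block structure (a vertex of $B$ not a cut-vertex of $D_L$ has all but its $B$-neighbors outside, hence lots of room). The degree-of-freedom counting should show that a block which is not ``tight'' (cycle, odd bidirected cycle, bidirected $K_k$, single arc) always admits a color extension.

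For the digon-free case I would observe that an arc $uv\in A(B)$ with $vu\notin A(B)$ cannot be ``shortcut'' around, and a careful case analysis on low vertices of out-degree and in-degree $k-1$ forces, via the $2$-connectivity of the block, that $B$ is a single directed cycle (option (b)); a single arc is the degenerate case (a). For the case with digons, I would pass to $S(B)$; since $B$ is $k$-critical-flavoured locally, $S(B)$ should be a block of the low-vertex subgraph of an appropriate critical graph (or behave enough like one), and Gallai's undirected theorem then says each block of $S(B)$ is an edge, an odd cycle, or a complete graph. One then checks that mixed digons-plus-single-arcs inside one $2$-connected block are impossible by the same extension lemma, so $B$ is entirely bidirected, giving options (c) and (d), or entirely a directed structure, giving (a) and (b).

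\textbf{Main obstacle.} The delicate point is the color-extension lemma in the directed setting: in Gallai's undirected proof one uses that a non-separating vertex of a block has degree $\ge 2$ inside the block and small degree outside, and then extends a $(k-1)$-coloring by a Kempe-chain / alternating argument. For digraphs, ``acyclic color classes'' replace ``independent color classes'', so the analogue of a Kempe chain is more subtle — recoloring along a class must not create a directed cycle, and one must track both in- and out-neighbors. I expect the bulk of the work (and the real content beyond bookkeeping) to be proving that for a ``bad'' block one can always reroute colors to extend to $B$; the rest is then a finite case analysis on the local structure of $2$-connected digraphs in which every vertex has in- and out-degree exactly $k-1$ in the ambient digraph.
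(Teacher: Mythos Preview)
Your plan has the right instinct --- reduce to a color-extension problem on the block $B$ --- but the specific route you outline has real gaps, and the paper's argument is organized quite differently.

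The paper does \emph{not} split into digon-free versus digon-containing cases, and it does not attempt to reduce to the undirected Gallai theorem via $S(B)$. Instead it proceeds in two clean steps. First, it proves directly (Claim~\ref{claim_eulerian}) that every vertex of a block $B$ of $D_L$ satisfies $d_B^+(v)=d_B^-(v)$, i.e.\ $B$ is Eulerian. This is done by a ``shifting'' argument: from a $(k-1)$-coloring of $D-v$ one pushes colors around an induced cycle of $B$ through $v$ (Proposition~\ref{prop_shifting}), and a careful parity/orientation analysis of that cycle yields a contradiction if $d_B^+(v)\neq d_B^-(v)$. Second, once $B$ is known to be Eulerian, the paper takes a $(k-1)$-coloring $\varphi$ of $D-B$, sets $L(v)=C\setminus \varphi(N_D^+(v)\setminus V(B))$, checks that $|L(v)|\ge \max\{d_B^+(v),d_B^-(v)\}$ precisely because of the Eulerian property, observes that $B$ cannot be $L$-colorable (else $D$ would be $(k-1)$-colorable), and then invokes the Harutyunyan--Mohar theorem (Theorem~\ref{theorem_harut}) as a black box to get the block classification.

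Your proposal misses both of these ingredients. The Eulerian property of $B$ is exactly what makes the degree-counting for list sizes work, and you never isolate it; without it, your ``degree-of-freedom counting'' for extending a coloring into $B$ will not close. More seriously, your plan to handle the digon case by passing to $S(B)$ and quoting undirected Gallai is not justified: $S(B)$ is not in general the low-vertex subgraph of any critical graph, so there is no undirected Gallai statement to invoke. And the assertion that ``mixed digons-plus-single-arcs inside one $2$-connected block are impossible by the same extension lemma'' is precisely the hard part --- it is true, but it is a \emph{consequence} of the structural theorem, not an easy input to it. The paper sidesteps all of this by proving Eulerianness first and then outsourcing the structural classification to Harutyunyan--Mohar, which already encapsulates the directed Kempe-chain analysis you correctly flag as the main obstacle.
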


 For the proof of Theorem~\ref{theorem_gallai} we will use a theorem of  Harutyunyan and Mohar~\cite{HaMo11} concerning list-colorings of digraphs. Given a digraph $D$, a \textbf{list-assignment} $L$ is a function that assigns each vertex $v \in V(D)$ a set (list) of colors. An $L$\textbf{-coloring} $\varphi$ of $D$ is a coloring of $D$ such that $\varphi(v) \in L(v)$ for all $v \in V(D)$.

 \begin{theorem}[\cite{HaMo11}] \label{theorem_harut}
Let $D$ be a connected digraph, and let $L$ be a list-assignment such that $|L(v)| \geq \max \{d_D^+(v), d_D^-(v)\}$ for all $v \in V(D)$. Suppose that $D$ is not $L$-colorable. Then, $D$ is Eulerian and for every block $B$ of $D$ one of the following cases occurs:
\begin{itemize}
\item[\upshape (a)] $B$ is a directed cycle of length $\geq 2$.
\item[\upshape (b)] $B$ is a bidirected cycle of odd length $\geq 3$.
\item[\upshape (c)] $B$ is a bidirected complete graph.
\end{itemize}
 \end{theorem}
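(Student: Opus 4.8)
The plan is to prove the contrapositive. Call a digraph $D$ with $|L(v)|\ge\max\{d^+_D(v),d^-_D(v)\}$ for all $v$ \emph{degree-feasible}; we must show that a connected degree-feasible $D$ which is \emph{not} of the stated form — that is, $D$ is not Eulerian, or $D$ is Eulerian but some block of $D$ is neither a directed cycle, a bidirected odd cycle, nor a complete bidirected graph — is $L$-colourable. I would argue by a minimal counterexample, minimizing $(|V(D)|,|A(D)|)$ lexicographically. The engine is a sequential-colouring bound: colour $V(D)=\{v_1,\dots,v_n\}$ greedily in any fixed order; when $v_i$ is reached, a colour $c\in L(v_i)$ is \emph{blocked} only if $c$ already occurs on some coloured in-neighbour of $v_i$ \emph{and} on some coloured out-neighbour (otherwise assigning $c$ to $v_i$ cannot close a monochromatic directed cycle through $v_i$), so $v_i$ has at most $\min\{d^-_{<i}(v_i),d^+_{<i}(v_i)\}$ blocked colours, where $d^\pm_{<i}$ counts neighbours of $v_i$ among $v_1,\dots,v_{i-1}$. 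From this I extract the key reduction: \emph{if $D$ is connected, degree-feasible, and has a vertex $v_0$ with $|L(v_0)|>\min\{d^+_D(v_0),d^-_D(v_0)\}$, then $D$ is $L$-colourable.} To see it, order $V(D)$ by non-increasing distance from $v_0$ in a spanning tree of the underlying graph — so $v_0$ is last and every other vertex has a later neighbour — and check, using that later neighbour, that $\min\{d^-_{<i}(v_i),d^+_{<i}(v_i)\}\le\max\{d^+_D(v_i),d^-_D(v_i)\}-1<|L(v_i)|$ for $i<n$, while $\min\{d^+_D(v_0),d^-_D(v_0)\}<|L(v_0)|$ by hypothesis; the greedy colouring never stalls. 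Consequently a minimal counterexample $D$ is Eulerian (a non-Eulerian vertex would serve as $v_0$) and ``tight'', $d^+_D(v)=d^-_D(v)=|L(v)|$ for all $v$, and some block of $D$ is not special.

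Next I would eliminate separating vertices. Suppose $v$ is separating, $D=D_1\cup D_2$ with $V(D_1)\cap V(D_2)=\{v\}$ and each $D_i$ connected with at least two vertices. Since $v$ has a neighbour on the other side, $\min\{d^+_{D_i}(v),d^-_{D_i}(v)\}\le d_D(v)-1<|L(v)|$, so by the reduction each $D_i$ is $L$-colourable; since $D$ is not, the sets $C_i\subseteq L(v)$ of colours realizable at $v$ in $D_i$ are disjoint. Applying the sequential bound within each connected $D_i$ with $v$ placed last gives $|L(v)\setminus C_i|\le\min\{d^+_{D_i}(v),d^-_{D_i}(v)\}$, and combining with $|C_1|+|C_2|\le|L(v)|=d_D(v)$ and $d^+_{D_1}(v)+d^+_{D_2}(v)=d^-_{D_1}(v)+d^-_{D_2}(v)=d_D(v)$ turns every inequality into an equality: $v$ is Eulerian inside each $D_i$ with $r_i:=d^+_{D_i}(v)=d^-_{D_i}(v)$, and $|L(v)\setminus C_i|=r_i$. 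Now $D_i$, with $L(v)$ replaced by the $r_i$-element set $L(v)\setminus C_i$ (so still degree-feasible, since $r_i=\max\{d^+_{D_i}(v),d^-_{D_i}(v)\}$), is connected, not $L$-colourable, and smaller than $D$; by the induction hypothesis it is Eulerian with every block special. As the blocks of $D$ are exactly those of $D_1$ together with those of $D_2$, all blocks of $D$ are special — a contradiction. So the minimal counterexample $D$ is $2$-connected, hence equals its unique block, which is not special.

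It remains to handle a $2$-connected $D$. If $D$ is bidirected, say $D=D(G)$, then $G$ is $2$-connected with $|L(v)|=d_G(v)$, and since a colour class of $D(G)$ is acyclic exactly when it is independent in $G$, $G$ is not $L$-colourable; by the classical Gallai-type theorem for undirected (list) colouring (see \cite{Gal63a}), a $2$-connected graph that is not degree-choosable is complete or an odd cycle, so $D$ is a complete bidirected graph or a bidirected odd cycle — not a counterexample. Otherwise $D$ has an arc $uv$ with $vu\notin A(D)$; then $D-v$ is connected by $2$-connectivity, $u$ has $\min\{d^+_{D-v}(u),d^-_{D-v}(u)\}=d_D(u)-1<|L(u)|$, so $D-v$ is $L$-colourable. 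If no $L$-colouring of $D-v$ extended to $v$, then in every such colouring all $d_D(v)$ colours of $L(v)$ are blocked at $v$, which (again by the sequential bound, now tight) forces the out-neighbours of $v$ to receive all of $L(v)$ bijectively, the in-neighbours likewise, and a monochromatic directed path to join each matching pair — an extremely rigid situation. A recolouring argument exploiting the spare capacity the neighbours of $v$ have in $D-v$ then breaks one of these paths and frees a colour at $v$, unless propagating the rigidity forces every vertex of $D$ to have in- and out-degree $1$, i.e., $D$ is a single directed cycle — again not a counterexample. This closes the induction.

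The delicate part is the $2$-connected non-bidirected step and, across a separating vertex, the extraction of structure from the equality cases: forcing the colour of $v$ does not give a degree-feasible instance, so one must argue via the set of realizable colours at $v$ and then recognize that a digraph in which too many colours at a fixed vertex kill $L$-colourability is, locally, one of the special types. The sequential bound makes the counting routine; carrying out this rigidity analysis in full is, I expect, the bulk of Harutyunyan and Mohar's argument.
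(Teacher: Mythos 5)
This statement is quoted by the paper as an external result (Theorem~\ref{theorem_harut} is imported verbatim from Harutyunyan and Mohar~\cite{HaMo11} and used as a black box in the proof of Theorem~\ref{theorem_gallai}), so there is no in-paper proof to compare against; your attempt has to be measured against the cited source. Your overall architecture is the right one and matches the standard Gallai-tree scheme: the greedy bound (a colour is blocked at $v_i$ only if it already occurs on both a coloured in-neighbour and a coloured out-neighbour, giving at most $\min\{d^-_{<i},d^+_{<i}\}$ blocked colours), the resulting reduction to the case where $D$ is Eulerian with $d^+_D(v)=d^-_D(v)=|L(v)|$ everywhere, the splitting at a separating vertex via the disjoint realizable colour sets $C_1,C_2$ and the equality analysis that produces smaller degree-feasible uncolourable instances, and the reduction of the bidirected $2$-connected case to the Borodin/Erd\H{o}s--Rubin--Taylor characterization of degree-choosable graphs (which, incidentally, is the right citation here rather than \cite{Gal63a}). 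All of these steps check out.

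The genuine gap is the final case: $D$ $2$-connected, Eulerian, tight lists, and not bidirected. This is exactly where the substance of the theorem lies --- it is the case that explains why directed cycles appear as admissible blocks --- and your treatment of it is an assertion, not a proof. After observing that $D-v$ is $L$-colourable and that non-extendability forces $\varphi$ to be a bijection from $N^+_D(v)$ onto $L(v)$ and from $N^-_D(v)$ onto $L(v)$ with a monochromatic directed path joining each matched pair, you claim that ``a recolouring argument exploiting the spare capacity'' either frees a colour at $v$ or forces all in- and out-degrees to be $1$. No such argument is given, and it is not routine: one must show, for example, that every $2$-connected Eulerian digraph with $d^+\equiv d^-\equiv 2$ that is neither bidirected nor a directed cycle (say, two directed cycles overlapping in a path) is colourable from arbitrary lists of size $2$, and the rigidity does not propagate along single arcs the way it does along edges in the undirected Kempe-chain argument, precisely because an arc $uv$ without its reverse constrains only one of the two degrees at each end. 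Carrying this case out is the bulk of Harutyunyan and Mohar's paper; as written, your proposal proves the easy reductions and leaves the core case open.
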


The next proposition states some important facts that will be needed for the proof of Theorem~\ref{theorem_gallai}.

 \begin{proposition}\label{prop_shifting}
Let $D_L$ be the low vertex subdigraph of a $k$-critical digraph $D$.  Moreover, given a vertex $v \in V(D_L)$, let $\varphi$ be a $(k-1)$-coloring of $D - v$ with color set $C$. Then the following statements hold:
\begin{itemize}
\item[\upshape (a)] Each color from $C$ appears exactly once in $N^+(v)$ and in $N^-(v)$.
\item[\upshape (b)] If $u \in V(D_L)$ is adjacent to $v$, then uncoloring $u$ and coloring $v$ with the color of $u$ leads to a $(k-1)$-coloring of $D-u$.
\end{itemize}
 \end{proposition}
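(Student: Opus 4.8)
The statement to prove is Proposition~\ref{prop_shifting}, which has two parts concerning the low vertex subdigraph $D_L$ of a $k$-critical digraph $D$.

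\medskip

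The plan is to exploit the definition of low vertex and the criticality of $D$ directly. For part~(a), fix a low vertex $v$ and a $(k-1)$-coloring $\varphi$ of $D-v$ with color set $C$, where $|C| = k-1$. Since $\overr{\chi}(D) = k$, the coloring $\varphi$ cannot be extended to $v$; thus for \emph{every} color $c \in C$, assigning $c$ to $v$ must create a monochromatic directed cycle through $v$. Such a cycle must use an in-arc and an out-arc at $v$, so $v$ has at least one in-neighbor colored $c$ and at least one out-neighbor colored $c$. Hence every color of $C$ appears at least once among $N^+(v)$ and at least once among $N^-(v)$. But $|N^+(v)| = d_D^+(v) = k-1 = |C|$ since $v$ is a low vertex, and likewise $|N^-(v)| = k-1$; so by a counting/pigeonhole argument each color appears \emph{exactly} once in $N^+(v)$ and exactly once in $N^-(v)$. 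This is the easy direction and should be essentially the argument already sketched in the paragraph preceding Theorem~\ref{theorem_gallai}.

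\medskip

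For part~(b), let $u \in V(D_L)$ be adjacent to $v$, and let $c = \varphi(u)$ be its color. I uncolor $u$ and recolor $v$ with $c$; call the resulting partial coloring $\varphi'$, which colors all of $V(D) \setminus \{u\}$, i.e. it is a candidate $(k-1)$-coloring of $D - u$. I must show no color class of $\varphi'$ contains a directed cycle. Any such cycle would have to pass through $v$, since $\varphi'$ agrees with $\varphi$ everywhere except on $v$ (uncoloring $u$ can only destroy cycles, not create them) and $\varphi$ was a proper coloring of $D - v$. So suppose $v$ lies on a monochromatic directed cycle $Z$ in color $c$ under $\varphi'$; $Z$ uses an in-neighbor $x \in N^-(v)$ and an out-neighbor $y \in N^+(v)$, both colored $c$ under $\varphi'$, hence under $\varphi$ as well (they are not $u$, because $u$ is uncolored in $\varphi'$ — here I use that $u \neq x$ and $u \neq y$, which needs a small check, see below). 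But by part~(a) the unique in-neighbor of $v$ colored $c$ under $\varphi$ is exactly $u$ (since $u \in N^-(v)$ or $u \in N^+(v)$ — $u$ is adjacent to $v$), and similarly the unique out-neighbor colored $c$ is $u$. Depending on whether the arc between $u$ and $v$ goes one way, both, I get that $Z$ would be forced to use $u$ as the in- or out-neighbor, contradicting that $u$ is uncolored. So $\varphi'$ is a proper $(k-1)$-coloring of $D - u$.

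\medskip

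The main subtlety — and the step I would write out most carefully — is the case analysis on the arc(s) between $u$ and $v$ in part~(b): $u$ could be only an in-neighbor of $v$, only an out-neighbor, or both (a digon). If $u$ is the unique in-neighbor of $v$ with color $c$ but $v$ also has a distinct out-neighbor $y \neq u$ with color $c$, then after recoloring one must be sure no new cycle arises; the resolution is that any monochromatic cycle through $v$ in color $c$ needs \emph{both} a $c$-colored in-neighbor and a $c$-colored out-neighbor of $v$, and by part~(a) at least one of these two roles can only be filled by $u$ — and $u$ is now uncolored — unless the arc between $u$ and $v$ is oriented the "wrong" way, in which case one instead observes that the would-be cycle, together with the arc from/to $u$, yields a monochromatic cycle through $v$ under the \emph{original} $\varphi$ extended by $\varphi(v) := c$, contradicting that $\varphi$ could not be extended to $v$ via part~(a)'s reasoning. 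Stating this dichotomy cleanly, and handling the digon case where $u$ is both the unique $c$-in-neighbor and unique $c$-out-neighbor, is where I would spend the bulk of the effort; everything else is bookkeeping.
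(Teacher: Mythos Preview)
Your proposal is correct and follows essentially the same approach as the paper. For part~(b), the paper's argument is more direct than your case analysis: it simply assumes without loss of generality that $uv \in A(D)$, so that by~(a) the vertex $u$ is the \emph{unique} in-neighbor of $v$ with color $\varphi(u)$; after uncoloring $u$, $v$ has no in-neighbor of that color, and hence assigning $\varphi(u)$ to $v$ cannot create a monochromatic cycle --- your third-paragraph dichotomy and the ``unless'' clause are unnecessary.
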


 \begin{proof}
 Suppose (by symmetry) that there is a color $\alpha \in C$ such that $\alpha$ does not appear in $N^+(v)$. Then, coloring $v$ with $\alpha$ cannot create a monochromatic cycle in $D$ (as $v$ has no out-neighbor with color $\alpha$) and, thus, $D$ would be $(k-1)$-colorable, a contradiction. As $d_D^+(v)=k-1=|C|$, this proves (a).

For the proof of (b), assume (by symmetry) that $uv \in A(D)$. Then it follows from (a) that after uncoloring $u$, $v$ has no in-neighbor with color $\varphi(u)$ and so coloring $v$ with color $\varphi(u)$ cannot create a monochromatic cycle.
 \end{proof}

 In the following, we will call the procedure that is described in Proposition~\ref{prop_shifting} (b) \textbf{shifting} the color from $u$ to $v$ and briefly write $u \to v$. Now let $D$ be a $k$-critical digraph, let $C$ be a (not necessarily directed) cycle in $D_L$ and let $v \in V(C)$. Moreover, let $\varphi$ be a $(k-1)$ coloring of $D - v$ and let $u$ and $w$ be the vertices such that $u,v$ and $w$ are consecutive in $C$. Then, beginning with $u \to v$, we can shift each vertex of $C$, one after another, clockwise and obtain a new $(k-1)$-coloring of $D - v$ (see Figure~\ref{fig_shift-example}). Similar, beginning with $w \to v$, we can shift each vertex of $C$ counter-clockwise and obtain a third $(k-1)$-coloring of $D-v$. The main idea for this goes back to Gallai~\cite{Gal63a}; we will use this observation frequently in the following.

 \begin{figure}[htbp]
\centering
\begin{tikzpicture}[>={[scale=1.1]Stealth}]

\node[draw=none,minimum size=3cm,regular polygon,regular polygon sides=5] (a) {};

\node[vertex, fill=red, label={above:$w$}] (a1) at (a.corner 1) {};
\node[vertex, label={left:$v$}] (a2) at (a.corner 2){};
\node[vertex, fill=blue, label={left:$u$}] (a3) at (a.corner 3){};
\node[vertex, fill=red, label={right:$v_4$}] (a4) at (a.corner 4){};
\node[vertex, fill=green, label={right:$v_3$}] (a5) at (a.corner 5){};
\node[circle, inner sep=2pt, draw=black] at (a1) [xshift=-1.5cm]{$1$};

\path[-]
(a1) edge [->] (a2)
(a2) edge [<-] (a3)
(a3) edge [->] (a4)
(a4) edge [->] (a5)
(a1) edge [->] (a5);

\node (h1) at (a1) [xshift=2.5cm]{};
\node (h2) at (h1) [yshift=-3cm]{};
\draw[dashed] (h1) -- (h2);

\begin{scope}[xshift=5cm]{};
\node[draw=none,minimum size=3cm,regular polygon,regular polygon sides=5] (a) {};

\node[vertex, fill=red, label={above:$w$}] (a1) at (a.corner 1) {};
\node[vertex, fill= blue, label={left:$v$}] (a2) at (a.corner 2){};
\node[vertex, label={left:$u$}] (a3) at (a.corner 3){};
\node[vertex, fill=red, label={right:$v_4$}] (a4) at (a.corner 4){};
\node[vertex, fill=green, label={right:$v_3$}] (a5) at (a.corner 5){};

\path[-]
(a1) edge [->] (a2)
(a2) edge [<-] (a3)
(a3) edge [->] (a4)
(a4) edge [->] (a5)
(a1) edge [->] (a5);

\node (h1) at (a1) [xshift=2.5cm]{};
\node (h2) at (h1) [yshift=-3cm]{};
\draw[dashed] (h1) -- (h2);
\node (l1) at (a1) [yshift=-3.5cm] {$u \to v$};
\node[circle, inner sep=2pt, draw=black] at (a1) [xshift=-1.5cm]{$2$};
\end{scope}

\begin{scope}[xshift=10cm]{};
\node[draw=none,minimum size=3cm,regular polygon,regular polygon sides=5] (a) {};

\node[vertex, fill=red, label={above:$w$}] (a1) at (a.corner 1) {};
\node[vertex, fill= blue, label={left:$v$}] (a2) at (a.corner 2){};
\node[vertex, fill=red, label={left:$u$}] (a3) at (a.corner 3){};
\node[vertex, label={right:$v_4$}] (a4) at (a.corner 4){};
\node[vertex, fill=green, label={right:$v_3$}] (a5) at (a.corner 5){};

\path[-]
(a1) edge [->] (a2)
(a2) edge [<-] (a3)
(a3) edge [->] (a4)
(a4) edge [->] (a5)
(a1) edge [->] (a5);

\node (h1) at (a1) [xshift=2.5cm]{};
\node (h2) at (h1) [yshift=-3cm]{};
\node (l1) at (a1) [yshift=-3.5cm] {$v_4 \to u$};
\node[circle, inner sep=2pt, draw=black] at (a1) [xshift=-1.5cm]{$3$};
\end{scope}

\begin{scope}[yshift=-4.5cm]{};
\node[draw=none,minimum size=3cm,regular polygon,regular polygon sides=5] (a) {};

\node[vertex, fill=red, label={above:$w$}] (a1) at (a.corner 1) {};
\node[vertex, fill= blue, label={left:$v$}] (a2) at (a.corner 2){};
\node[vertex, fill=red, label={left:$u$}] (a3) at (a.corner 3){};
\node[vertex, fill=green, label={right:$v_4$}] (a4) at (a.corner 4){};
\node[vertex, label={right:$v_3$}] (a5) at (a.corner 5){};

\path[-]
(a1) edge [->] (a2)
(a2) edge [<-] (a3)
(a3) edge [->] (a4)
(a4) edge [->] (a5)
(a1) edge [->] (a5);

\node (h1) at (a1) [xshift=2.5cm]{};
\node (h2) at (h1) [yshift=-3cm]{};
\draw[dashed] (h1) -- (h2);
\node (l1) at (a1) [yshift=-3.5cm] {$v_3 \to v_4$};
\node[circle, inner sep=2pt, draw=black] at (a1) [xshift=-1.5cm]{$4$};
\end{scope}

\begin{scope}[yshift=-4.5cm, xshift=5cm]{};
\node[draw=none,minimum size=3cm,regular polygon,regular polygon sides=5] (a) {};

\node[vertex, label={above:$w$}] (a1) at (a.corner 1) {};
\node[vertex, fill= blue, label={left:$v$}] (a2) at (a.corner 2){};
\node[vertex, fill=red, label={left:$u$}] (a3) at (a.corner 3){};
\node[vertex, fill=green, label={right:$v_4$}] (a4) at (a.corner 4){};
\node[vertex, fill=red, label={right:$v_3$}] (a5) at (a.corner 5){};

\path[-]
(a1) edge [->] (a2)
(a2) edge [<-] (a3)
(a3) edge [->] (a4)
(a4) edge [->] (a5)
(a1) edge [->] (a5);

\node (h1) at (a1) [xshift=2.5cm]{};
\node (h2) at (h1) [yshift=-3cm]{};
\draw[dashed] (h1) -- (h2);
\node (l1) at (a1) [yshift=-3.5cm] {$w \to v_3$};
\node[circle, inner sep=2pt, draw=black] at (a1) [xshift=-1.5cm]{$5$};
\end{scope}

\begin{scope}[yshift=-4.5cm, xshift=10cm]{};
\node[draw=none,minimum size=3cm,regular polygon,regular polygon sides=5] (a) {};

\node[vertex, fill=blue, label={above:$w$}] (a1) at (a.corner 1) {};
\node[vertex, label={left:$v$}] (a2) at (a.corner 2){};
\node[vertex, fill=red, label={left:$u$}] (a3) at (a.corner 3){};
\node[vertex, fill=green, label={right:$v_4$}] (a4) at (a.corner 4){};
\node[vertex, fill=red, label={right:$v_3$}] (a5) at (a.corner 5){};

\path[-]
(a1) edge [->] (a2)
(a2) edge [<-] (a3)
(a3) edge [->] (a4)
(a4) edge [->] (a5)
(a1) edge [->] (a5);

\node (h1) at (a1) [xshift=2.5cm]{};
\node (h2) at (h1) [yshift=-3cm]{};
\node (l1) at (a1) [yshift=-3.5cm] {$v \to w$};
\node[circle, inner sep=2pt, draw=black] at (a1) [xshift=-1.5cm]{$6$};
\end{scope}

\end{tikzpicture}
\caption{The black vertex denotes the clockwise shifting around a cycle.}
\label{fig_shift-example}
\end{figure}
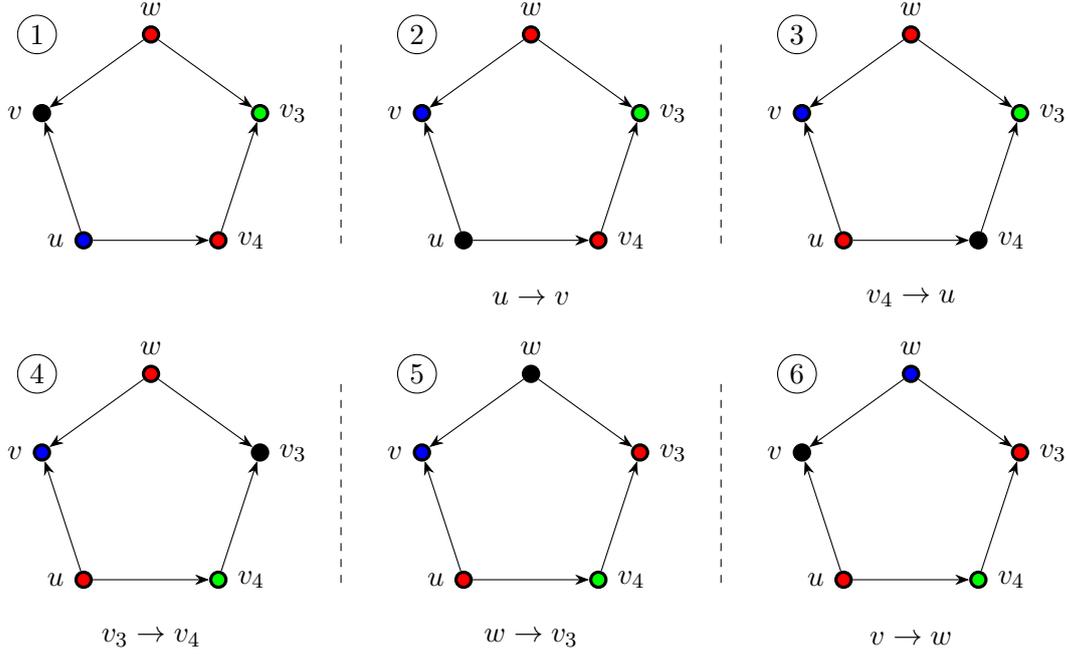

 \begin{proofof}[Theorem~\ref{theorem_gallai}]
Let $D_L$ be the low vertex subdigraph of a $k$-critical digraph $D$ and let $B$ be an arbitrary block of $D_L$. If $|B|=1$, then $B=\bd{K_1}$ and we are done. If $|B|=2$, then either $B$ consists of just one arc or $B$ is a bidirected complete graph and so there is nothing to show. Thus, we may assume $|B| \geq 3$.

 \begin{claim} \label{claim_eulerian}
 For all vertices $v \in V(B)$ we have $d_{B}^+(v) = d_{B}^-(v)$ and so $B$ is Eulerian.
 \end{claim}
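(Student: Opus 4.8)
The goal is to show $d_B^+(v)=d_B^-(v)$ for every $v\in V(B)$, which, since $B$ is connected, is exactly the statement that $B$ is Eulerian. I would fix $v\in V(B)$ and use two standing observations. First, because $|B|\ge 3$ and $B$ has no separating vertex, the underlying graph of $B$ is $2$-connected; hence $B-v$ is connected, $v$ has at least two neighbours in $B$, and every vertex of $B$ lies on a cycle of $B$. Second, reversing all arcs of $D$ produces another $k$-critical digraph whose low-vertex subdigraph and blocks are the reverses of those of $D$, with $d^+$ and $d^-$ interchanged; so I may assume, towards a contradiction, that $d_B^+(v)>d_B^-(v)$.

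The main tool is the shifting machinery of Proposition~\ref{prop_shifting}. Fix a $(k-1)$-coloring $\varphi$ of $D-v$. By Proposition~\ref{prop_shifting}(a) each of the $k-1$ colours appears exactly once on $N_D^+(v)$ and exactly once on $N_D^-(v)$, so the colours on the out-neighbours of $v$ in $B$ form a set $A^+$ of size $d_B^+(v)$, those on the in-neighbours of $v$ in $B$ form a set $A^-$ of size $d_B^-(v)$, and by assumption some colour $c$ lies in $A^+\setminus A^-$, realised by a (necessarily pure) out-neighbour $y$ of $v$ in $B$. The plan is to use the shift $y\to v$ of Proposition~\ref{prop_shifting}(b) and then further shifts along cycles of $B$, each legal by Proposition~\ref{prop_shifting}(b) exactly as in the paragraph preceding this claim, to move the uncoloured vertex around the block. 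The key structural input is that shifting once around a cycle $C$ of $B$ through the current uncoloured vertex $x$ yields a new $(k-1)$-coloring of $D-x$ in which the colours along $V(C)\setminus\{x\}$ have been cyclically rotated by one step; comparing this with the previous coloring through Proposition~\ref{prop_shifting}(a) at $x$ forces the multiset of colours on the out-neighbours of $x$ lying on $C$ (and, separately, the multiset on the in-neighbours of $x$ lying on $C$) to be invariant under that rotation. Exploiting this over a suitable family of cycles of $B$ through $v$ — available from an ear decomposition and $2$-connectedness — I would argue that a strict imbalance $d_B^+(v)>d_B^-(v)$ cannot persist, because eventually one reaches a $(k-1)$-coloring of $D-x$ that misses some colour on $N_D^+(x)$ or on $N_D^-(x)$, contradicting Proposition~\ref{prop_shifting}(a) (as then $x$, and hence $D$, would be $(k-1)$-colorable).

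The step I expect to be the real obstacle is this colour bookkeeping. A monochromatic directed cycle is a global object, so one must track carefully which colours can recur on the relevant neighbourhoods through a sequence of shifts, handling the orientation of each arc on a cycle $C$ and any chords from $v$ into $C$; this is why one wants induced cycles and colorings assigning distinct colours along $C$, and why one cycle may not suffice. As a fallback — and in any event this is what the rest of Theorem~\ref{theorem_gallai} needs — once $B$ is known to be Eulerian one can take a $(k-1)$-coloring $\psi$ of $D-V(B)$ (which exists since $D$ is $k$-critical and $V(B)\neq\varnothing$), set $L(w)=\{1,\dots,k-1\}\setminus\{\psi(u):u\in N_D^+(w)\setminus V(B)\}$ for $w\in V(B)$ so that $|L(w)|\ge d_B^+(w)=\max\{d_B^+(w),d_B^-(w)\}$, note that an $L$-coloring of $B$ combines with $\psi$ to a $(k-1)$-coloring of $D$ (a monochromatic directed cycle would have to use an arc from $B$ to $V(D)\setminus V(B)$, forbidden by the lists), conclude that $B$ is not $L$-colorable, and apply Theorem~\ref{theorem_harut} to read off the block structure.
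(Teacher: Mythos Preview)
Your proposal has a genuine gap. You correctly identify the tools --- Proposition~\ref{prop_shifting} and shifting around cycles --- and your invariance observation is sound: after a full rotation around an induced cycle $C$ through the uncoloured vertex $v$, the set of colours on $N_D^+(v)\cap V(C)$ (and separately on $N_D^-(v)\cap V(C)$) must be unchanged, since neighbours of $v$ outside $C$ keep their colours and Proposition~\ref{prop_shifting}(a) forces each colour to appear exactly once in each neighbourhood. But you stop at this observation and only gesture at ear decompositions and ``eventually one reaches a coloring that misses a colour''; this is precisely where the work lies, and nothing you wrote shows that the imbalance $d_B^+(v)\neq d_B^-(v)$ actually produces such a coloring. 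Your last paragraph is not a fallback for the claim at all --- it is the remainder of the proof of Theorem~\ref{theorem_gallai} \emph{after} the claim, and it presupposes that $B$ is Eulerian.

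The paper's argument makes the contradiction concrete in three steps that your outline does not reach. Assuming $d_B^+(v)<d_B^-(v)$, pick a colour $\alpha$ on some $u\in N_B^-(v)$ but on no vertex of $N_B^+(v)$; then $\alpha$ sits on some $v'\in N_D^+(v)\setminus V(B)$. \emph{Step 1} (a sharp use of your invariance): if $v$ had any out-neighbour $w$ in $B$, take an induced cycle $C$ of $B$ with $u,v,w$ consecutive and rotate once so that $w$ acquires colour $\alpha$; now $w$ and $v'$ are two out-neighbours of $v$ coloured $\alpha$, contradicting Proposition~\ref{prop_shifting}(a). Hence $d_B^+(v)=0$. \emph{Step 2}: on any induced cycle $C$ through $v$ both neighbours $u,w$ of $v$ on $C$ are now in-neighbours, and the same invariance forces the colours along $C\setminus\{v\}$ to alternate between exactly $\varphi(u)$ and $\varphi(w)$, so $C$ has odd length. \emph{Step 3}: a further shifting argument determines the orientation of every arc of $C$ and contradicts $uv\in A(D)$. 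Steps~2 and~3 are entirely absent from your plan, and Step~1 is only implicit; without them there is no proof.
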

 \begin{proof2}
 For otherwise, we may assume that $d_B^+(v) < d_B^-(v)$ for some $v \in V(B)$. Let $\varphi$ be a $(k-1)$-coloring of $D-v$. Since $d_D^+(v)=d_D^-(v) = k-1$, it follows from Proposition~\ref{prop_shifting}(a) that there is a color $\alpha$ that appears in $N_B^-(v)$ but not in $N_B^+(v)$. Let $u$ be the vertex from $N_B^-(v)$ with $\varphi(u)=\alpha$. Note that Proposition~\ref{prop_shifting}(a) furthermore implies that there is a vertex in $v' \in N_D^+(v) \cap (V(D) \setminus V(B))$ that has color $\alpha$. First we show that $d_B^+(v)=0$.  Suppose, to the contrary, that $d^+_B(v) > 0$ and let $w$ be an out-neighbor of $v$ in $D_L$. Then, in $B$ there is a (not-necessarily directed) induced cycle $C$ such that $u,v$ and $w$ are consecutive on $C$. Beginning with $u \to v$, we shift all vertices of $C$ clockwise and obtain a new $(k-1)$-coloring $\varphi'$ of $D-v$ with $\varphi'(w)=\alpha$. Since no vertex from $V(D) \setminus V(C)$ took part in the shifting, we have $\varphi'(v')=\varphi(v')=\alpha$ and so $\alpha$ appears twice in $N_D^+(v)$, contradicting Proposition~\ref{prop_shifting}(a). This proves that $d_B^+(v)=0$.

Let again $C$ be an (undirected) induced cycle in $B$ such that $u$ and $v$ are consecutive on $C$ and let $w$ be the other neighbor of $v$ in $C$. Then, $w$ is also an in-neighbor of $v$ (as $d_B^+(v)=0$). Thus, it follows from Proposition~\ref{prop_shifting}(a) that $\varphi(w)\neq \varphi(u)$, say $\varphi(w)=\beta$. Moreover, we obtain that the vertices of $C$ (except from $v$) are colored alternately with $\beta$ and $\alpha$. Otherwise, there are two consecutive vertices $x,x'$ on $C$ such that $\{\varphi(x), \varphi(x')\} \neq \{\alpha, \beta\}$. Then we can shift the colors around the vertices of $C$ such that $u$ gets color $\varphi(x)$ and $w$ gets color $\varphi(x')$ and obtain a $(k-1)$-coloring $\varphi'$ of $D-v$ with $\{\varphi'(u), \varphi'(w)\} \neq \{\alpha,\beta\}$, which contradicts Proposition~\ref{prop_shifting}(a) as $C$ is induced and so no neighbors of $v$ besides $u$ and $w$ take part in the shifting.

 As a consequence, $C$ has odd length. Now let $v=v_1,w=v_2,v_3,\ldots,u=v_r,v_1$ be a cyclic ordering of the vertices of $C$. We claim that $v_3v_2 \not \in A(D)$. Assume, to the contrary, $v_3v_2 \in A(D)$. Then, we can shift $w \to v$ and obtain a coloring $\varphi'$ of $D-w$ with $\varphi'(v)=\beta$ and $\varphi'(v_3)= \alpha$. In particular, $v_3$ is the only in-neighbor of $w$ that has color $\alpha$ with respect to $\varphi'$. On the other hand, beginning from $\varphi$ with $u \to v$, we can shift every vertex besides $v$ clockwise around $C$  (the last shift is $w \to v_3$) and get a $(k-1)$-coloring $\varphi^*$ of $G-w$ with $\varphi^*(v) = \alpha$ and $\varphi^*(v_3)= \beta$. As $vw \not \in A(D)$ and as $C$ is induced, it follows that $w$ has no in-neighbor that has color $\alpha$ with respect to $\varphi^*$, a contradiction. Hence, $v_3v_2 \in A(D)$ and so $v_2v_3 \in A(D)$. By repeating this argumentation, we obtain that $v_{i+1}v_i \not \in A(D)$ but $v_iv_{i+1} \in A(D)$ for $i \geq 2$ even and that $v_iv_{i+1} \not \in A(D)$ but $v_{i+1}v_i \in A(D)$ for $i \geq 3$ odd. In particular, this leads to $v_rv \not \in  A(D)$, a contradiction. This proves the claim.
 \end{proof2}
Now let $\varphi$ be a $(k-1)$-coloring of $D-B$ with color set $C$. For $v \in V(B)$, let
$$L(v) = C   \setminus \varphi(N_D^+(v) \setminus V(B)).$$
then, as $d_D^+(v)=d_D^-(v)=k-1=|C|$ and since $d_B^+(v)=d_B^-(v)$ by Claim~\ref{claim_eulerian}, we have $|L(v)| \geq \max \{d_B^+(v), d_B^-(v)\}$ for all $v \in V(B)$. Moreover, $B$ is not $L$-colorable, as the union of any $L$-coloring of $B$ with $\varphi$ would clearly lead to a $(k-1)$-coloring of $D$. Hence, we can apply Theorem~\ref{theorem_harut} and so $B$ is a directed cycle, or an odd bidirected cycle, or a bidrected complete graph, as claimed.
 \end{proofof}

In the undirected case, Gallai~\cite{Gal63a} showed that the only blocks of the low vertex graph are complete graphs or odd cycles. Although for digraphs the directed cycles arise naturally, it may surprise that there can also be blocks  that consist of just one arc. That this indeed may happen is illustrated in Figure~\ref{fig_K_4Haj}, where we show the Haj\'os join of two $\bd{K_4}$; here the low vertex subdigraph consists of every vertex except the identified vertex $v$. Clearly, by starting from a $\bd{K_k}$ and iteratively taking Haj\'os joins with another $\bd{K_k}$, we can even create infinite families of digraphs $D$ such that there are blocks of $D_L$ consisting of just a single arc.

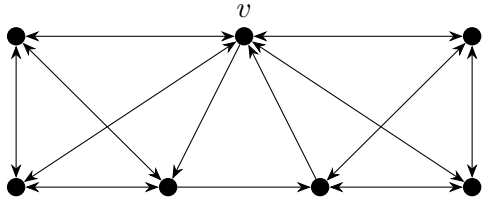
\begin{figure}[htbp]
\centering
\begin{tikzpicture}[>={[scale=1.1]Stealth}]

\node[vertex] (u1){};
\node[vertex, label={above:$v$}, xshift=3cm] (u2) at (u1) {};
\node[vertex, xshift=3cm] (u3) at (u2) {};
\node[vertex,  yshift=-2cm] (u4) at (u1) {};
\node[vertex, xshift=2cm] (u5) at (u4) {};
\node[vertex, xshift=2cm] (u6) at (u5){};
\node[vertex, xshift=2cm] (u7) at (u6){};

\path[-]
(u1) edge [<->] (u2)
	    edge [<->] (u4)
	    edge [<->] (u5)
(u2) edge [<->] (u4)
		edge [->] (u5)
		edge [<-] (u6)
		edge [<->] (u3)
		edge [<->] (u7)
(u3) edge [<->] (u6)
	    edge [<->] (u7)
(u4) edge [<->] (u5)
(u5) edge [->] (u6)
(u6) edge [<->] (u7);		;	    

\end{tikzpicture}
\caption{The Haj\'{o}s join of two bidirected $K_4$.}
\label{fig_K_4Haj}
\end{figure}

Gallai used the characterization of the low vertex subgraph of critical graphs he obtained in \cite{Gal63a} to establish a lower bound for the number of edges of critical graphs. We can apply the same approach to obtain a similar bound for the number of arcs in critical digraphs, see also \cite{SchwST2017}.

\begin{corollary}
\label{corollary:gallaibound}
Let $D$ be a $(k+1)$-critical digraph with $k\geq 3$ and without digons. Then
$$2|A(D)|\geq \left( 2k+\frac{2k-2}{(2k+1)^2-3} \right)|D|$$
unless $D=\bd{K}_{k+1}$.
\end{corollary}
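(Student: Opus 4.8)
The plan is to run Gallai's edge-counting argument, with Theorem~\ref{theorem_gallai} supplying the only structural input. First observe that the exceptional case is vacuous under our hypotheses: $\bd{K}_{k+1}$ contains digons, so a digon-free digraph is never equal to it, and we may simply prove the displayed inequality for every digon-free $(k+1)$-critical $D$. Write $n=|D|$, let $L$ and $H$ be the sets of low and high vertices of $D$, and put $n_L=|L|$, $n_H=|H|$, so that $n_L+n_H=n$. Since $D$ is $(k+1)$-critical, deleting a vertex $v$ leaves a $k$-colourable digraph in which $v$ must have an out- and an in-neighbour in each colour class; hence $d_D^+(v),d_D^-(v)\ge k$ for every $v$, with both equalities holding exactly when $v\in L$, and $d_D^+(v)+d_D^-(v)\ge 2k+1$ whenever $v\in H$.

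Next I would determine the shape of $D_L$. By Theorem~\ref{theorem_gallai} every block of $D_L$ is a single arc, a directed cycle of length $\ge2$, a bidirected odd cycle, or a bidirected complete graph; since $D$, and hence $D_L$, contains no digons, the directed $2$-cycles, the bidirected odd cycles, and all bidirected $\bd{K}_r$ with $r\ge2$ are excluded, so every block of $D_L$ is a single arc or a directed cycle of length $\ge3$ (besides trivial one-vertex blocks). Consequently, for a connected component $\Gamma$ of $D_L$ with $s:=|V(\Gamma)|$ vertices, every block of $\Gamma$ is a $K_2$ or a cycle, so the cyclomatic number of $\Gamma$ equals its number of cycle-blocks; from $s-1=\sum_B(|V(B)|-1)$, in which each cycle-block contributes at least $2$, that number is at most $(s-1)/2$, and therefore $|A(\Gamma)|=(s-1)+(\text{number of cycle-blocks})\le\frac{3}{2}(s-1)$. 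Summing over components gives the crude sparsity bound $|A(D_L)|\le\frac{3}{2}n_L$.

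Now the double count. Each $v\in L$ has exactly $k-d_{D_L}^+(v)$ out-neighbours and $k-d_{D_L}^-(v)$ in-neighbours in $H$, so the number of arcs of $D$ joining $L$ and $H$ is $e(L,H)=2kn_L-2|A(D_L)|\ge(2k-3)n_L$. Since $2|A(D)|=\sum_{v}(d_D^+(v)+d_D^-(v))=2kn_L+\sum_{v\in H}(d_D^+(v)+d_D^-(v))$ and $\sum_{v\in H}(d_D^+(v)+d_D^-(v))=2|A(D[H])|+e(L,H)$, this sum over $H$ is at least $e(L,H)$ and also at least $(2k+1)n_H$, which yields the two inequalities $2|A(D)|\ge(4k-3)n_L$ and $2|A(D)|\ge 2kn+n_H$.

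Finally, set $\varepsilon=\frac{2k-2}{(2k+1)^2-3}$. If $n_H\ge\varepsilon n$, the second inequality already gives $2|A(D)|\ge(2k+\varepsilon)n$. Otherwise $n_L=n-n_H>(1-\varepsilon)n$, and the first inequality gives $2|A(D)|>(4k-3)(1-\varepsilon)n$, so it suffices to verify $(4k-3)(1-\varepsilon)\ge 2k+\varepsilon$; rearranging, this is $\varepsilon\le\frac{2k-3}{4k-2}$, which after clearing denominators amounts to $8k^3-12k^2-4k+2\ge0$, true for all $k\ge3$. I do not expect a genuine obstacle in this argument: the one nontrivial ingredient, the block list of $D_L$, is exactly Theorem~\ref{theorem_gallai}, already available, and the rest is elementary counting together with one polynomial inequality; the points requiring attention are merely the correct pruning of that block list under the digon-free assumption and keeping the chain of estimates tight enough to reach the stated constant. (The case $n_H=0$, which in fact cannot occur since a digraph all of whose blocks have $\overr{\chi}\le2$ is $2$-colourable, is covered automatically, as then $2|A(D)|\ge(4k-3)n\ge(2k+\varepsilon)n$; and one could get a sharper constant by exploiting that directed cycles are far sparser than the complete blocks allowed in Gallai's undirected theorem, but the crude estimate $|A(D_L)|\le\frac{3}{2}n_L$ is already more than enough.)
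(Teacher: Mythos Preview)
Your proof is correct and follows the same Gallai edge-counting strategy as the paper: use Theorem~\ref{theorem_gallai} to control the arcs inside $D_L$, derive one inequality that is strong when low vertices dominate and one that is strong when high vertices dominate, then combine. The one substantive difference is how the sparsity of $D_L$ is obtained. The paper cites an external lemma (Lemma~3.3 of \cite{SchwST2017}) for the bound $(2k-1+\tfrac1k)|U|-2a(U)\ge 2$, whereas you derive the much sharper and entirely self-contained estimate $|A(D_L)|\le\tfrac32(n_L-1)$ directly from the fact that, in the digon-free case, every nontrivial block of $D_L$ is a single arc or a directed cycle of length $\ge 3$. Your tighter bound then lets you finish with a simple case split on whether $n_H\ge\varepsilon n$, while the paper eliminates $|U|$ by taking a weighted linear combination of its two inequalities. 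Both routes reach the stated constant, but yours has the advantage of being fully self-contained and, as you note, leaves room to spare.
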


\begin{proof}
Let $V=V(D)$ and let $n=|V|$. For a set $X\subseteq V$, let $a(X)$ denote the number of arcs of $D[X]$. Furthermore, let
$$R=\left(2k+\frac{2k-2}{(2k+1)^2-3} \right).$$
Our aim is to show that $2a(V)\geq Rn$. Let $U=V(D_L)$ be the set of low vertices of $D$ and let $W=V\sm U$. If $U=\ems$, then $2a(V)\geq (2k+1)n\geq Rn$ and we are done. So assume that $U\not=\ems$. Since $D$ has no digons and $D\not=\bd{K}_{k+1}$, it follows that $\bd{K}_{k+1}$ is no subdigraph of $D$. By Theorem~\ref{theorem_gallai}, each block of $D_L$ consists of exactly one arc or is a directed cycle of length $\geq 3$. Then, see \cite[Lemma 3.3]{SchwST2017}, we have
$$\left( 2k-1+\frac{1}{k}\right)|U|-2a(U)\geq 2.$$
Since every vertex of $U$ has total degree $2k$ in $D$ (i.e., $d_D^+(v)+d_D^-(v)= 2k$ for all $v\in U$) and $n=|U|+|W|$, we then obtain that
$$2a(V)=2a(W)+4k|U|-2a(U)\geq 4k|U|-2a(U)\geq \left(2k+1-\frac{1}{k}\right)|U|+2$$
On the other hand, since every vertex in $W$ has total degree at least $2k+1$, we obtain that
$$2a(V)\geq 2kn+|W|\geq (2k+1)n-|U|.$$
Adding the first inequality to the second inequality multiplied with $(2k+1-1/k)$ yields
$$2a(V)(2k+2-1/k)\geq (2k+1-1/k)(2k+1)n+2.$$
As $(2k+2-1/k)=((2k)^2+4k-2)/(2k)>0$, this leads to
$$2a(V)\geq \frac{((2k)^2+2k-2)(2k+1)n+4k}{(2k)^2+4k-2}\geq Rn.$$
Thus the proof is complete.
\end{proof}

\section{Open Questions}
Since the field of critical digraphs is still wide open, a lot of questions immediately come to mind. It follows from Theorem~\ref{theorem_Hajos-constructible} that each $k$-critical digraph is Haj\'os-$k$-constructible. However, the proof of Theorem~\ref{theorem_Hajos-constructible} is not constructive at all and the authors feel quite embarrassed in admitting that they could not even manage to construct a bidirected cycle of length five from $\bd{K_4}$'s using Haj\'os joins and identification of non-adjacent vertices. Thus, we want to pose the following question.

\begin{question}
How can a bidirected $C_5$ be constructed from $\bd{K_4}$'s by only using Haj\'os joins and identifying non-adjacent vertices?
\end{question}

Building upon this question, it is of particular interest to study the connection of the Haj\'os construction to computational complexity. In the undirected case, Mansfield and Welsh~\cite{MaWe82} stated the problem of determining the complexity of the Haj\'os construction. They noted that if for any $k \geq 3$ there would exist a polynomial $P$ such that every graph of order $n$ with chromatic number $k$ contains a Haj\'os-$k$-constructible subgraph that can be obtained by at most $P(n)$ uses of the Haj\'os-join and identification of non-adjacent vertices, then $\NP = \coNP$.  Hence, it is very likely that the Haj\'os construction is not polynomially bounded but not much progress has been made on this problem yet. Pitassi and Urquhart~\cite{PiUr95} found a linkage to another important open problem in logic; they proved that a restricted version of the Haj\'os construction is polynomially bounded if and only if extended Frege systems are polynomially bounded.

\begin{question}
For $k \geq 3$, is there a polynomial $P$ such that every digraph of order $n$ contains a Haj\'os-$k$-constructible subdigraph that can be obtained from $\bd{K_k}$'s by at most $P(n)$ uses of the Haj\'os-join and identification of non-adjacent vertices?
\end{question}

A beautiful theorem of Gallai~\cite{Gal63b} states that any $k$-critical graph with order at most $2k-2$ and $k \geq 2$ is the Dirac join of two disjoint non-empty critical graphs (for the Dirac join of two undirected graphs just add all possible edges between the two graphs $G_1$ and $G_2$). Within the last decades, various different proofs of this theorem have been published (see e.g. \cite{Mol99} and \cite{Ste03}). Clearly, a graph $G$ is the Dirac join of two disjoint non-empty graphs if and only if $\overline{G}$ is disconnected and so most of the proofs use matching theory for the complement graph $\overline{G}$. However, it is yet unclear how to do this for digraphs.

\begin{question}
Let $k \geq 3$ be an integer. Is there a $k$-critical digraph $D$ on at most $2k - 2$ vertices that is not the Dirac join of two proper digraphs $D_1$ and $D_2$?
\end{question}

In coloring theory of digraphs, it is often of particular interest how digon-free digraphs behave. For example, it was shown by Harutyunyan in his PhD thesis~\cite{Ha11} that almost all tournaments of order $n$ have chromatic number at least $\frac{1}{2}(\frac{n}{\log n + 1})$. As a consequence, if $n$ is large enough then for some $k \geq \frac{1}{2}(\frac{n}{\log n + 1})$ there are $k$-critical digon-free digraphs on at most $n$ vertices. This leads to our final question.

\begin{question}
For fixed $k \geq 3$, what is the minimum integer $N(k)$ such that there is a $k$-critical digon-free digraph on $N(k)$ vertices?
\end{question}

As $k-1 \leq \min \{ d_D^+(v), d_D^-(v)\}$ for all vertices $v$ of a $k$-critical digraph $D$, we trivially have $N(k) \geq 2k-1$. In fact, Brooks' theorem for digraphs~\cite{Mo10} implies that $N(k) \geq 2k$ for $k\geq 3$. Moreover, some small values are already known: the directed triangle shows that $N(2) = 3$, and Neumann-Lara~\cite{NeuLa94} proved that $N(3) = 7$, $N(4) = 11$, and $17 \leq N(5) \leq 19$; he conjectured that $N(5) = 17$.

\end{document}